\numberwithin{equation}{section}
\newcounter{CountAlpha}
\newtheorem{MainThm}[CountAlpha]{Theorem}
\newtheorem{Thm}{Theorem}[section]
\newtheorem{Lem}[Thm]{Lemma}
\newtheorem{Prop}[Thm]{Proposition}
\newtheorem{Cor}[Thm]{Corollary}
\newtheorem{Setup}[Thm]{Setup}
\theoremstyle{definition}
\newtheorem{Def}[Thm]{Definition}
\newtheorem{Constr}[Thm]{Construction}
\newtheorem{Rk}[Thm]{Remark}
\newtheorem{Ex}[Thm]{Example}
\newtheorem{Not}[Thm]{Notation}
\newtheorem{Obs}[Thm]{Observation}
\newcommand{\inv}{\mathrm{inv}}
\newcommand{\ord}{\mathrm{ord}}
\newcommand{\Diff}{\mathrm{Diff}}
\newcommand{\Spec}{\mathrm{Spec\,}}
\newcommand{\Sing}{\mathrm{Sing\,}}
\newcommand{\Dir}{\mathrm{Dir}}
\newcommand{\IDir}{\mathbb{D}\mathrm{ir}}
\newcommand{\wrt}{with respect to }
\newcommand{\WLOG}{without loss of generality }
\newcommand{\vin}{\rotatebox{90}{\ensuremath{\in}}}
\newcommand{\leftmapsto}{\mathrel{\reflectbox{\ensuremath{\longmapsto}}}}
\newcommand{\RSP}{regular system of parameters}
\newcommand{\simH}{\sim_{\cE}}
\newcommand{\simHx}{\sim_{\cE(x)}}
\newcommand{\simHxpi}{\sim_{\cE(x')}}
\newcommand{\IPH}{pair with history}
\newcommand{\IPsH}{pairs with history}
\newcommand{\IEH}{idealistic exponent with history}
\newcommand{\IEsH}{idealistic exponents with history}
\newcommand{\ini}{\operatorname{in}}
\newcommand{\gr}{\operatorname{gr}}
\newcommand{\resfield}{K}
\newcommand{\TC}{T}
\newcommand{\ITC}{\mathbb{\TC}}
\newcommand{\dBu}[2]{{#1}^{ ( #2 ) } }
\newcommand{\IA}{\mathbb{A}} 
\newcommand{\IC}{\mathbb{C}}	
\newcommand{\ID}{\mathbb{D}}	
\newcommand{\IE}{\mathbb{E}}	
\newcommand{\IQ}{\mathbb{Q}}	
\newcommand{\IR}{\mathbb{R}}	
\newcommand{\IZ}{\mathbb{Z}}	
\newcommand{\cD}{\mathcal{D}}
\newcommand{\cE}{\mathcal{E}}
\newcommand{\cF}{\mathcal{F}}
\newcommand{\cG}{\mathcal{G}}
\newcommand{\cH}{\mathcal{H}}
\newcommand{\cI}{\mathcal{I}}
\newcommand{\cO}{\mathcal{O}}
\newcommand{\cT}{\mathcal{T}}
\newcommand{\maxIdeal}{M}
\author{Bernd Schober}
\address{None (Hamburg, Germany)}
\email{schober.math@gmail.com}
\thanks{Partially supported by the DFG Emmy Noether Programme “Arithmetic over finitely generated fields” (Project number: 155362679) and by the DFG Research Fellowship "Resolution of singularities in positive characteristic: the case of small dimension or large characteristic" (Project number: 250489866).}
\keywords{resolution of singularities, idealistic exponents, characteristic polyhedra, Newton polyhedra}
\subjclass{32S45, 14B05, 14E15}
\begin{document}

\title{A polyhedral approach to the invariant of Bierstone and Milman}

\begin{abstract}
	Based on previous work by the author we deduce that the invariant introduced by Bierstone and Milman in order to give a proof for constructive resolution of singularities in characteristic zero can be determined purely by considering certain polyhedra and their projections.
\end{abstract}

\maketitle

%
%
%
%
%
%
%
%
%
%
%
%
%

\section{Introduction}
\label{Intro}

The goal of this article is to prove that the local invariant for resolution of singularities in characteristic zero by Bierstone and Milman \cite{BMheavy} can be determined using polyhedra associated to the singularity.

In his celebrated paper \cite{Hiro64} Hironaka proved the existence of resolution of singularities for arbitrary dimensional algebraic varieties over fields of characteristic zero.
The original proof is quite complicated and consists of more than 200 very technical pages.
Moreover, the result is not constructive.
Nowadays, there are quite accessible and constructive proofs available, which are based on Hironaka's work.
First results on this were published by Bierstone and Milman \cite{BMSimpleConstr}, \cite{BMheavy} and Villamayor \cite{OrlandoConstr}, \cite{OrlandoPatch}.
Other references are for example \cite{AnaSantiOrlando05,CutBook,EncinasHauser,Hauser,Kollar,Wlo}.
In recent years new techniques using stack-theoretic weighted blow-ups created even more efficiency in desingularization in characteristic zero,
e.g., \cite{AQ,ATW_weighted, McQuillan,Quek}.
In contrast to this, there are only results in small dimension, for special cases, or for weaker versions of resolution of singularities in positive and mixed characteristics. 
Since we focus in the present article on the characteristic zero situation, we do not go deeper into the details here
and only point out that a technical tool that turned out to be useful for investigations in positive characteristic is Hironaka's characteristic polyhedron of a singularity \cite{HiroCharPoly}
which arises as a suitable projection of the Newton polyhedron.

In this article we focus on the invariant of Bierstone and Milman \cite{BMheavy} 
(see also \cite{BMlight} where the special case of a hypersurface singularity is considered).
Given a scheme $ X $ of finite type over a field of characteristic zero and a point $ x \in X $,
the invariant $ \inv_X ( x ) $ has the form
\[ 
	\inv_X ( x ) = ( \nu_1, s_1;\, \nu_2, s_2; \, \ldots;\, \nu_t, s_t;\, \nu_{t + 1} ),
\]
where
$ \nu_1 = \nu_1 (x) = H_{X,x} $ is the Hilbert-Samuel function of $ X $ at $ x $, 
the entries $ s_r \geq 0 $ are integers counting exceptional divisors and
$ \nu_{r+1} > 0 $ are rational numbers representing certain higher multiplicities for $ r \geq 1 $ with additionally $ \nu_{t+1} \in \{ 0 , \infty \} $. 
Key properties of $ \inv_X(.)$ are that it determines a regular center for the upcoming blow-up, that it strictly decreases after the designated blow-up and that such a decrease can only happen finitely many times.

Since we focus on the construction of the invariant locally at a point, 
we pass to the completion $ \widehat\cO_{X,x} $ so that using the Cohen-Structure-Theorem for complete local rings, 
we can restrict the situation for the present article to
$ \widehat\cO_{X,x} \cong K[[T_1, \ldots, T_n]]/J $ for an ideal $ J $.
We neither address the question whether the deduced invariants arising from the polyhedra depend on passing to the completion nor do we discuss that the information extracted from the polyhedron is canonical for the singularity $ x \in X $. 
For Hironaka's characteristic polyhedron such kind of questions are discussed in \cite{CPcompl,CSCcompl} and \cite{CJSc}. 

In order to formulate our main result, we have to introduce a suitable language.
We will work with Hironaka's notion of pairs. 
A pair $ \IE = (J,b) $ consists of a non-zero ideal $ J \subset R := K[[T_1, \ldots, T_n]] $ and a positive rational number $ b \in \IQ_+ $.
The singular locus of $ \IE $ is defined as the locus of points where $ J $ has order at least $ b $.
This technicality is required to appropriately detect refined information on the singularity when restricting to smaller ambient spaces.  

Within our considerations we distinguish the regular system of parameters in a suitable way as 
$ (T_1, \ldots, T_n) = (u_1, \ldots, u_d; y_1, \ldots, y_s ) = (u,y) $
(cf.~condition $(*)$ in Theorem~\ref{Thm:BerndHiro} and Theorem~\ref{Thm:nuandsoon}). 
Given such a distinction $ (u,y ) $, the author introduced the projected polyhedron $ \Delta(\IE,u,y) $ (Definition~\ref{Def:PolyNonIntrinsic}) in
\cite{BerndPoly} (see also \cite{BerndThesis}).
Let us provide some intuition for this polyhedron: 
Suppose $ J = \langle f \rangle $ is principal.
Recall that $ \IE = (J,b) $ with some fixed $ b \in \mathbb{Q}_+ $.
Consider an expansion $ f = \sum_{(A,B)\in \IZ_{\geq 0}^{d+s}} \lambda_{A,B} u^A y^B $ for $ \lambda_{A,B} \in K $ and using multi-index notation, e.g., $ u^A = u_1^{A_1} \cdots u_d^{A_d} $. 
Then $ \Delta(\IE,u,y) $ is the closed convex subset of $ \IR_{\geq 0}^d $ containing all points of the set 
\[ 
	\left\{ \frac{A}{b-|B|} + \IR_{\geq 0}^d \ \bigg| \ \lambda_{A,B} \neq 0 \mbox{ and } |B| < b \right\} .
\] 
For example, for $ f = y_1^3 y_2 + u_1^7 u_2 y_1 + u_1^4 u_2^3 + u_1^6 u_2^4 y_2^3 + u_1 u_2^8 y_1 y_2 + u_2^9 y_2^5 + u_1^{16} u_2^6 $
and $ b = 4 $ the projected polyhedron $ \Delta (\IE,u,y) $ looks as follows
(where $ \delta := \delta(\IE,u,y) $ and $ d_1 := d_1 (\IE,u,y) $ are explained below):
\[
	\begin{tikzpicture}[scale=0.6]
	
	\draw[<->, thick] (0,4.7)--(0,0)--(7.7,0);
	
	\foreach \x in {0,...,7}
	\draw (\x,0.1) -- (\x,-0.1) node [below] {\small \x};
	
	\foreach \x in {1,...,4}
	\draw (0.1,\x) -- (-0.1,\x) node [left] {\small \x};

	\fill[lightgray] (1/2,9/2)--(1/2,8/2)--(4/4,3/4)--(7/3,1/3)--(15/2,1/3)--(15/2,9/2);
	\filldraw (7/3,1/3) circle (2.5pt);
	\filldraw (4/4,3/4) circle (2.5pt);
	\filldraw (6/1,4/1) circle (2.5pt);
	\filldraw (1/2,8/2) circle (2.5pt);
	\filldraw (4,3/2) circle (2.5pt);
	
	\draw[very thick] (1/2,4.6)--(1/2,8/2)--(4/4,3/4)--(7/3,1/3)--(7.6,1/3);
	
	\draw[thick, dashed] (7/4,0)--(0,7/4);
	\draw[thick] (0.1,7/4) -- (-0.75,7/4) node [left] {\small $ \delta $};
	\draw[thick] (7/4,0.1) -- (7/4,-0.75) node [below] {\small $ \delta $};

	\draw[thick, dotted] (1/2,4)--(1/2,0);
	\draw[thick] (1/2,0.1) -- (1/2,-0.75) node [below] {\small $ d_1 $};
	
\end{tikzpicture} 
\]

The numerical data coming from the polyhedron that we require is the following:
\[
\begin{array}{rclll}
	\delta ( \IE, u, y )  &:= & \inf \{  |v| = v_1 + \cdots + v_d \mid v \in \Delta (\IE,u,y)  \}
	& \mbox{(Definition~\ref{def:delta(Delta)})},
	\\
	d_i ( \IE, u , y  ) &:=& \inf \{ v_i \mid v = ( v_1, \ldots, v_d ) \in \Delta (\IE, u,y) \} 
	& \mbox{(Definition~\ref{Def:nu})},	 
\end{array}
\]
for $ i \in \{ 1, \ldots, d \} $.
If $ \Delta (\IE, u,y) \neq \emptyset $ and $ I \subseteq \{ 1, \ldots, d \} $, we define
\[
	\nu_I ( \IE, u, y ) := \delta ( \IE, u, y ) - \sum_{i \in I } d_i ( \IE, u, y ) 
\]
and otherwise  $ \nu_I ( \IE, u, y ) := \infty $. 

To encode the data coming from the exceptional divisors
we extend the notion of pairs to \emph{\IPsH}%
\footnote{After developing the notion of idealistic exponents with history in his thesis \cite{BerndThesis} the author recognized that it is a slight variant of \emph{NC-divisorial exponents} which Hironaka introduced in \cite{HiroThreeKey}.}
$ ( \IE, \cE ) $ (Definition~\ref{Def:IEH}),
where $ \cE $ is a map from which one can obtain the numbers $ d_i (\IE,u,y) $.
Then we define 
$
\nu ( \IE, \cE, u, y ) := 
\nu_{I_\cE}(\IE,u,y) $ (Definition~\ref{Def:IEH_number}),
where $ I_\cE \subseteq \{ 1, \ldots, d \} $
corresponds to the exceptional divisors containing $ x $.

In Construction~\ref{Constr:nu} we recall the definition of $ \inv_X(x) $ by Bierstone and Milman in terms of pairs with history. 
The entry $ \nu_{r+1}(x) $ of $ \inv_X(x) $ is determined from a pair $ (\cF_r , \cE_r) $. 
Here, $ (\cF_r , \cE_r) $ is constructed from $ (\cF_{r-1} , \cE_{r-1}) $ by suitably restricting to a smaller dimensional ambient space and taking the exceptional data into account.
In particular, $ (\cF_r, \cE_r ) $ is a pair on $ V (y_1, \ldots, y_{r-1}) $ for suitably chosen $ (y) = (y_1, \ldots, y_{r-1}, y_r) $
--- more precisely, the notion of maximal contact (Definition~\ref{Def:maximal}) is behind this suitable choice.

Let $ (u) = (u_1, \ldots, u_e) $ be a system extending $ (y) $ to a regular system of parameters for $ R $.
Our main result is the following connection to the polyhedra:

\begin{MainThm}[Theorem~\ref{Prop:ThmA2ndB}]
\label{MainThm:nuPurelyPoly}
	For every $ r \in \{1, \ldots, t\} $,
	the entry 
	$ \nu_{r+1}(x) $ is equal to 
	the invariant $ \nu ( \cF_r, \cE_r, u, y_r )  $ coming from the polyhedron $ \Delta (\cF_r,u,y_r) $.
\end{MainThm}

Let us point out that choosing  $ y_r $ such that $ V(y_r) $ has maximal contact with $ \cF_r $
is connected with finding a choice such that the polyhedron $ \Delta (\cF_r,u,y_r) $ becomes minimal with respect to inclusion (cf.~Theorem~\ref{Thm:BerndHiro}).
Since the minimizing process for the projected polyhedra can be introduced in any characteristic (cf.~\cite{BerndPoly}),
the results of the present article motivate the exploration into positive characteristics.
Nonetheless this is more involved and not straight-forward due to delicate phenomena arising, see for example \cite{CossartFewBad,Hauser2,HauserPerlega,Moh}.
In the forthcoming work \cite{AQS} Abramovich, Quek and the author will combine the perspective via weighted blow-ups \cite{ATW_weighted} and the present article to introduce an invariant for singularities in positive characteristic and describe its properties. 
In order to begin building the bridge,
we discuss a viewpoint using a weighted variant of the projected polyhedron in Observation~\ref{Obs:quasihomogeneous}. 

We point out that the considered polyhedra depend on a suitable choice for the regular system of parameters for $ R $ which will be modified along the process to make the polyhedra minimal with respect to inclusion. 
This is motivated by Hironaka's perspective via characteristic polyhedra~\cite{HiroCharPoly}. 
This distinguishes our approach from the results of Youssin \cite{Youssin1,Youssin2} who introduced a ``coordinate-free" version of Newton polyhedra to
construct a special filtration which then provides the center for the upcoming blow-up.
In particular, the regular system of parameter is not fixed and if there are parts of it fixed then without any reference to the generators of the ideal $ J $.

\subsection*{Outline of the article} 
In Section~\ref{Sec:Pairs} we recall the theory of pairs and their characteristic polyhedra.
In particular we introduce the numerical invariants deduced from the polyhedron which are essential for our main result. 
Following this, we recall the construction of the Bierstone-Milman invariant in the case where no exceptional divisors are present and prove
the main result in this setting in Section~\ref{Sec:NoExc}. 
Furthermore, we provide a look into the weighted perspective towards the end of the section.
In Section~\ref{Sec:History} we introduce the new notion of pairs and idealistic exponents with history as preparation for the general proof. 
After recalling the general construction of the Bierstone-Milman invariant at the beginning of Section~\ref{Sec:BMfull}, we deduce the main result in full generality.
Additionally, we discuss the behavior of the polyhedron in the different steps of the construction of the invariant.
Finally, we explain an approach to perform the construction in fewer steps by taking advantage of the added exceptional data. 

\subsection*{Acknowledgment} 
The results presented here are part of the author's thesis \cite{BerndThesis}.
He is grateful to his advisors Uwe Jannsen and Vincent Cossart for countless discussions on the topic and all their support.
Further, he thanks the Laboratoire de Math\'emathiques Versailles for their hospitality during several visits
as well as the anonymous referees for comments and questions which helped to improve the article.

%
%
%
%
%
%
%
%
%
%
%
%
%
%

\section{Pairs and their characteristic polyhedra}
\label{Sec:Pairs}

First, we recall the language of pairs and idealistic exponents.
This notion goes back to Hironaka \cite{HiroIdExp} (see also \cite{HiroThreeKey}) and has later been refined in different ways (e.g.~to basic objects \cite{AnaSantiOrlando05}, presentations \cite{BMheavy}, marked ideals \cite{Wlo}, or idealistic filtrations \cite{IFP1}).
We discuss the idealistic variants of the tangent cone, the directrix, as well as the coefficient ideal.
Further we give a brief introduction to the concept of the characteristic polyhedron of a pair, introduced by the author, see \cite{BerndPoly} (and also \cite{BerndThesis}).

Let
$ K $ be a field,
$ R = K[[T_1, \ldots, T_n]] $ or $ R = K[T_1, \ldots, T_n] $
and $ M = \langle T_1, \ldots, T_n \rangle $.
We denote by $ x \in \Spec(R) $ the closed point corresponding to the maximal ideal $ M $.

\begin{Def}
\label{Def:idexp_sing_perm}
	Let $ J \subset R $ be a non-zero ideal and $ b \in \IQ_+ $ be a positive rational number.
	We call $ \IE := (J,b) $ a  {\em pair on $ R $}. 
	Moreover, we define: 
	\begin{enumerate}[(1)]
		\item 
		For a prime ideal $ P \subset R $ the {\em order of $ \IE $ at $ P $} is defined as 
		as 
		\[  
			\ord_P (\IE) := 
			\begin{cases}
				\dfrac{\ord_P (J) }{b}, 	
				& 
				\mbox{if } \ord_P (J)  \geq b, 
				\\
				0,
				&
				\mbox{else},
		\end{cases}
		\] 
		where
		$ \ord_P (J) := \sup \{ d \in \IZ_{ \geq 0 } \cup \{\infty\} \mid J \subseteq P^{(d)} \} $ is the order of $ J $ at $ P $.
		
		\item 	
		The {\em singular locus of $ \IE $} (sometimes also called {\em (co-)support})
		is defined as 
		\[ 
			\Sing(\IE)= \{ P \in \Spec(R)  \mid \ord_P ( J ) \geq b \}. 
		\]
	
		\item 
		\label{It:permissible}
		A subscheme $ D \subset Z := \Spec (R ) $ 
		is called {\em permissible for $ \IE $}
		if $ D $ is regular and $ D \subseteq \Sing(\IE) $. 
		The corresponding blow-up
		$ \pi \colon Z' := B\ell_D (Z) \to Z  $ with center $ D $ is called permissible for $ \IE $.
		The {\em transform of $ \IE $} 
		at a point $ x' \in Z' $ (resp.~in an affine open $ U' \subset Z' $)
		is defined 
		as $ \IE' = (J', b) $, 
		where $ J' $ is determined by $ J \cO_{Z',x'} = J' H^b $
		(resp.~$ J \cO_{Z',U'} = J' H^b $)
		and $ H $ is the ideal locally defining the exceptional divisor.
\end{enumerate}
\end{Def}

	A {\em desingularization of a pair} is a finite sequence of permissible blow-ups 
	such that the singular locus of the final transform is empty. 
	
	In order to understand a given pair better,
	it is reasonable to identify two pairs which show the same behavior along permissible blow-ups.  
	This leads to an equivalence relation $ \sim $ on the set of pairs on $ R $.
	An {\em idealistic exponent} $ \IE_\sim $ is the equivalence class\footnote{In some literature, pairs are sometimes called idealistic exponents, e.g.~\cite{HiroThreeKey}. In order to avoid confusion when coming to results and the dependence on the choice of a representative of the equivalence class, 
		we use the original terminology of \cite{HiroIdExp}.}
	of a given pair $ \IE $. 
	The precise definition is quite technical 
	(involving local sequences of permissible blow-ups and allowing extensions of the ambient space $ Z = \Spec(R) $ 
	of the form $ Z \times_{\mbox{\footnotesize Spec} (\IZ)} \Spec (\IZ[\xi_1, \ldots, \xi_a]) $).
	As it will not be used in this article, 
	we refer to the literature for details, e.g.~\cite{HiroThreeKey} or \cite[Definition~1.5]{BerndPoly}.
	Here, we only recall results and constructions, which we will use. 
	Of course, the definition of the equivalence $ \sim $ plays an essential role in their proofs.

If $ \IE_1 = (J_1, b_1) $ and $ \IE_2 = (J_2, b_2) $ are two pairs on $ R $
and if we set $ c := \operatorname{lcm}(b_1,b_2) $ and $ c_1 := \frac{c}{b_1} $, $ c_2 := \frac{c}{b_2} $,
then the intersection $ \IE_1 \cap \IE_2 $ is defined by
\[
	\IE_1 \cap \IE_2 := (J_1^{c_1} + J_2^{c_2}, c ).
\]

\begin{Prop}
\label{Prop:basicandmore}
	Let $ \IE = (J,b) $ and $ \IE_i = (J_i,b_i) $, $ i \in \{1, 2 \}$, be pairs on $ R $.
	\begin{enumerate}[(1)]
		\item\label{It:Power}
		For every $ a \in \IZ_+ $,
		we have $ (J^a, a b) \sim (J,b) $.
					
		\item\label{It:intersec}
		If $ m \in \IZ_+ $ with $ b_1 \mid m $ and $ b_2 \mid m $, 
		then 
		$
			(J_1,b_1) \cap (J_2,b_2) \sim (J_1^{\frac{m}{b_1}} + J_2^{\frac{m}{b_2}}, m).
		$
			
		\item\label{It:sim_intersec}
		If $ \IE_1 \sim  \IE_2 $, then $ \IE_1 \cap \IE \sim \IE_2 \cap \IE $.
		
		\item\label{It:intersec_bu}
		Let $ \pi \colon Z' \to Z = \Spec(R) $ be a blow-up which is permissible for $ \IE_1 $ and $ \IE_2 $.
		We have $ (\IE_1 \cap \IE_ 2)' \sim \IE'_1 \cap \IE'_2 $.
		
		\item\label{It:Numerical} 
		If $ \IE_1 \sim \IE_2 $, then $ \ord_P (\IE_1) = \ord_P (\IE_2) $, for every prime ideal $ P \subset R $.
		In particular, we get $ \Sing(\IE_1) = \Sing (\IE_2) $ in this case.
		
		\item\label{It:Diff}
		Let  $ m \in \IZ_{\geq 0} $ and let 
		$ \cD $ be a left $ R $-submodule of the absolute differential operators $ \Diff^{\leq m}_{\IZ}(R) $ of $ R $ on itself of order at most $ m $.
		Then,  $ (J, b) \sim  (\cD J, b - m) \cap (J,b) $.
		
	\end{enumerate}
\end{Prop}

\begin{proof}
	Proofs for the properties \eqref{It:Power}--\eqref{It:intersec_bu} can be found in \cite[Lemma~1.1.8]{BerndThesis}.
	Part \eqref{It:Numerical} is a consequence of the {\em Numerical Exponent Theorem}
	\cite[Theorem 5.1]{HiroThreeKey}
	and \eqref{It:Diff} follows from the {\em Diff Theorem} \cite[Theorem 3.4]{HiroThreeKey}.
\end{proof}

The graded ring of $ R $ at $ M $ is defined as 
$ \gr_M (R) := \bigoplus_{a \geq 0} M^a/ M^{a+1} $.
We have $ \gr_M(R) \cong K[\mathcal{T}_1, \ldots, \mathcal{T}_n] $,
where $ \mathcal{T}_i $ corresponds to the image of $ T_i $ in the graded ring. 
The {\em initial form} at $ M $ of a non-zero element $ f \in R $ is given as the image of $ f $ in $ M^{\ord_M(f)} / M^{\ord_M(f) +1} $ and is denoted by $ \ini_M(f) $,
i.e., 
$ \ini_M(f) = f \mod M^{\ord_M(f)+1} $.
Using $ \ini_M(0) := 0 $,
the {\em initial ideal} of an ideal $ J \subset R $ (at $ M $) is 
defined as $ \ini_M(J) := \langle \ini_M(f) \mid f \in J \rangle \subseteq \gr_M(R) $.
In the setting of pairs these notions adapt as follows:

\begin{Def}
	Let $ \IE = (J, b) $ be a pair on $ R $ such that $ x \in \Sing(\IE) $.
	The {\em $ b $-initial form} of $ f \in J $ (\wrt $ \maxIdeal $) is defined as 
	\[
			\ini_M (f,b) := 
			\begin{cases}
				f \mod M^{ b + 1 }, 	
				&	 
				\mbox{if } 
				b \in \IZ_+ , 
				\\ 
				0 ,					
				&	 
				\mbox{if } 
				b \notin \IZ_+ .
			\end{cases}		
	\]
	We define the {\em tangent cone of $ \IE $ at $ M $}  
	as the cone $ \TC_M (\IE) \subset \Spec(\gr_M(R)) \cong \IA_K^n  $ 
	determined by the homogeneous ideal $ \ini_M (J,b) \subset \gr_M (R) $, 
	where
	\[
		\ini_M (J,b) := \ini_M (\IE) 
		:= 
		\begin{cases}
			J \mod \maxIdeal^{ b + 1 } 
			= 
			\langle \ini_M(f, b) \mid f \in J \rangle, 	
			&	 
			\mbox{if } 
			b \in \IZ_+, 
			\\ 
			\langle 0 \rangle,					
			&	 
			\mbox{if } 
			b \notin \IZ_+ .
		\end{cases}
	\]
\end{Def}

The tangent cone gives a first approximation for the singularity of $ \IE $ at $ M $. 
In the context of singularities, Hironaka introduced the notion of the directrix of a homogeneous ideal, 
which provides refined information on the homogeneous ideal. 
This notion was adapted for pairs and idealistic exponents by the author in \cite[Section 2]{BerndPoly}: 

\begin{Def}
	\phantomsection
	\label{Def:directrix_both}
	\begin{enumerate}[(1)]
		\item 	
		\label{It:DefDirI}
		Let $ S = K[\cT_1, \ldots, \cT_n] $ and let $ I \subset S $ be a non-zero, homogeneous ideal.
		Let $ C := C(I) := \Spec(S/I) \subset \IA_K^n $ be the cone corresponding to $ I $.
		The {\em directrix of $ C $} is the maximal sub-vector space $ \Dir(C) $ 
		of $ \IA_K^n $,
		which leaves the cone $ C $ stable under translation,
		i.e., such that $ C + \Dir(C) = C $. 
		
		Thus, $ \Dir(C) $ corresponds to the minimal sub-vector space $ U = \bigoplus_{j=1}^r K Y_j \subseteq S_1 = \bigoplus_{i=1}^n K \cT_i $
		(which is generated by homogeneous elements $ Y_1, \ldots, Y_r \in S_1 $ of degree one) such that 
		$ ( I \cap K[Y_1, \ldots, Y_r])\cdot S = I $;
		i.e.,
		$ ( Y_1, \ldots, Y_r ) $ is a smallest list of variables such that 
		there is a system of generators of $ I $ contained in $ K[Y_1, \ldots, Y_r] $. 
		We also say $ ( Y ) = ( Y_1, \ldots, Y_r ) $ defines the directrix and we implicitly assume that $ r $ is minimal.
		We call $ I\Dir ( C ) := \langle Y_1, \ldots, Y_r \rangle $ the {\em ideal of the directrix of $ C $}.
	
		\item
		\label{It:DefDirIE}
		Let $ \IE = (J,b) $ be a pair on $ R $ such that $ x \in \Sing (\IE) $.
		The {\em directrix of $ \IE $ at $ M $} is defined as  
		the directrix of the tangent cone of $ \IE $ at $ M $,
		$  \Dir_M (\IE) := \Dir (\TC_M (\IE)) $.
	\end{enumerate}
\end{Def}

There exists also the notion of the {\em ridge} (in French {\em fa\^ite})
of a cone \cite{GiraudEtude,GiraudPos,ComputeRidge},
which is a generalization of the directrix.
It is more suitable when working over fields of positive characteristic, see loc.~cit. or \cite{CPS}, 
but it has the drawback that it is an additive group scheme and not a vector space as the directrix.  
For more details on its idealistic variant, we refer to \cite[Section~2]{BerndPoly}.

\begin{Def}
	Let $ \IE = (J,b) $ be a pair on $ R $ with  $ x \in \Sing ( \IE ) $.
	We introduce the following pairs on $ \gr_M(R) \cong K[\cT_1, \ldots, \cT_n] $:
	\begin{center}
		\begin{tabular}{ll}
			$ \ITC_M ( \IE ) = (\, \ini_M ( \IE ),\, b \,) $ & \em (idealistic tangent cone of $ \IE $ at $ M $), \\[5pt]
			$ \IDir_M ( \IE ) = (\, I\Dir_M ( \IE ),\, 1 \,) $  & \em (idealistic directrix of $ \IE $ at $ M $).
		\end{tabular} 
	\end{center}	
\end{Def}

Note that for $ \IE_1 = (J_1,b_1) $ and  $ \IE_2 = (J_2,b_2) $ with $ x \in \Sing ( \IE_1 \cap \IE_2 ) $, we have:
\begin{center}
	\begin{tabular}{l}
		$ \ITC_M ( \IE_1 \cap \IE_2 ) =  \ITC_M ( \IE_1 ) \cap  \ITC_M (\IE_2 ) = (\, \ini_M ( \IE_1 ),\, b_1 \,) \cap (\, \ini_M ( \IE_2 ),\, b_2 \,) $,
		\\[5pt]
		$ \IDir_M ( \IE_1 \cap \IE_2 ) = \IDir_M ( \IE_1 ) \cap  \IDir_M ( \IE_2 ) = (\, I\Dir_M ( \IE_1 )+ I\Dir_M ( \IE_2 ),\, 1 \,)  $.
	\end{tabular} 
\end{center}

As the following result shows, the adjective ``idealistic" is justified, as equivalent pairs lead to equivalent idealistic tangent cones and directrices. 
Furthermore, we can connect the two objects of the previous definition. 

\begin{Prop}
	\phantomsection
	\label{Prop:Dir}
	\begin{enumerate}[(1)]
		\item
		Let $ \IE $ be a pair on $ R $ such that $ x \in \Sing ( \IE) $.
		We have: 
		\begin{enumerate}[(a)]
			\item 
			$ \IDir_M ( \IE ) \sim \IDir_M ( \IE ) \cap \ITC_M(\IE) $,
			
			\item 
			$ \Dir_M (\IE) =  \Sing (\IDir_M(\IE)) \subseteq \Sing (\ITC_M(\IE)) $,
			and
			
			\item
			if $ \operatorname{char}(K) = 0 $ or $ b < \operatorname{char} (K) $,
			then
			$ \IDir_M ( \IE ) \sim \ITC_M(\IE) $;
			in particular, the inclusion in (b) is an equality.
			 
		\end{enumerate} 
		\item\label{It:Dir2}
		Let $ \IE_1 \sim \IE_2 $ be two equivalent pairs on $ R $ such that $ x \in \Sing ( \IE_1 ) = \Sing ( \IE_2 ) $.
		We have 
		$ \ITC_M (\IE_1) \sim  \ITC_M (\IE_2) $
		and
		$ \Dir_M (\IE_1) = \Dir_M (\IE_2) $, 
	so 
	$  \IDir_M (\IE_1) = \IDir_M (\IE_2) $.
	\end{enumerate}
\end{Prop}

\begin{proof}
	Parts (1)(a) and (b) are consequences of
	\cite[Proposition~2.13]{BerndPoly},
	while (c) is \cite[Corollary~2.14]{BerndPoly}.
	The second part follows from \cite[Proposition~2.16]{BerndPoly}.
\end{proof}

Another useful tool for the local study of singularities (in characteristic zero)
is the coefficient ideal \wrt a closed subscheme of maximal contact.
We now recall its variant in the idealistic setting.
Note that we introduce the coefficient pair \wrt any $ W = V( z_1, \ldots, z_s) $ containing $ x $ such that $ (z) $ can be extended to a system of parameters for $ R $,
cf.~\cite[Section~5]{BerndPoly}.

Recall that $ R = K[[T_1, \ldots, T_n]] $ or $ R = K[T_1, \ldots, T_n] $.

\begin{Def}
\label{Def:IdCoeffExp}
	Let $ \IE = (J,b) $ be a pair on $ R $ such that $ x \in \Sing(\IE) $.
	Let $ (u, z ) = (u_1, \ldots, u_d; z_1, \ldots, z_s ) $ be a regular system of parameters for $ R $.
	The {\em coefficient pair $ \ID (\IE, u, z )  $ of $ \IE $ \wrt $ (z) $}
	is defined as the pair on $ R/\langle z \rangle $,
	constructed as follows:
	Any element $ f \in J $ can be written as
	\[ 
		f = f (u,z) = \sum_{B \in \IZ^s_{ \geq 0 } : |B|<b } f_B (u)\, z^B + h,
	\]
	where we have $ h \in \langle z \rangle^b $ and 
	$ f_B(u) \in K[[u]] $ 
	if $ R = K[[T_1, \ldots, T_n]] $,
	resp.~$ f_B(u) \in K[u] $ if $ R = K[T_1, \ldots, T_n] $. 
	We define  
	\[ 
		\ID(f,u,z)  := \bigcap\limits_{|B| < b} ( f_B (u) , \, b -|B| )
		\ \ \ 
		\mbox{ and } 
		\ \ \
		\ID (\IE, u, z) := \bigcap_{ f \in J } \ID(f, u, z)  .
	\]
	Here, we abuse notation by using the symbol $ f_B(u) $ also for the image of $ f_B(u) $ in $ R/\langle z\rangle $. 
\end{Def}

Before coming to results on coefficient pairs, we recall the concept of maximal contact.
Classical references for this are \cite{AHV,GiraudMaxZero}, for example.

\begin{Def}
	\label{Def:maximal}
	Let $ \IE = (J,b) $ be a pair on $ R $ with $ x \in \Sing ( \IE ) $.
	Let $ (z) = (z_1, \ldots, z_s) $ be elements
	which can be extended to a regular system of parameters for $ R $.
	We say $ V ( z_1, \ldots, z_s ) $ 
	has {\em maximal contact with $ \IE $ (at $ M $)} 
	if
	$ 
		\IE \sim (z, 1) \cap \IE.
	$ 
\end{Def}

\begin{Prop}
	\phantomsection
	\label{Prop:moreCoef}
	\begin{enumerate}[(1)]
		\item\label{It:equiv_z_E_DE} 
		Let $ \IE $ be a pair on $ R $ such that $ x \in \Sing ( \IE ) $
		and further let $ ( u ,z ) =  (u_1, \ldots, u_d; z_1, \ldots, z_s ) $ be a regular system of parameters for $ R $.
		\begin{enumerate}[(a)]
			\item\label{It:291a}
			We have 
			$ 
				(z,1) \cap \IE \sim (z,1) \cap \ID (\IE , u, z) .
			$ 
			
			\item\label{It:291b} 
			Let $ (y) = ( y_1, \ldots, y_s ) $ be another system extending $ ( u ) $ to a  regular system of parameters for $ R $.
			If $ (z,1) \cap \IE \sim (y,1) \cap \IE $,
			then 
			$ 
				\ID (\IE , u, z) \sim \ID (\IE , u, y) .
			$ 
		\end{enumerate}
		
		\item\label{It:sim_DE}
		Let $ \IE_1 \sim \IE_2 $ be two equivalent pairs on $ R $ with $ x \in \Sing ( \IE_1 ) = \Sing ( \IE_2 ) $.
		We have 
		$ 
			\ID (\IE_1 , u, z) \sim \ID (\IE_2 , u, z) .
		$ 
		
		\item\label{It:equiv_E_sim}
		Assume that $ char (\resfield) = 0 $ or $ b < char (\resfield) $.
		Then there exists a choice 
		$ (u,y) = (u_1, \ldots, u_e, y_1, \ldots, y_r) $ 
		for the parameters for $ R $
		such that 
		$ 
			\IE \sim (y,1) \cap  \ID (\IE, u, y ) 
		$ 
		and	$ (Y) = ( Y_1, \ldots, Y_r ) $ defines the directrix $ \Dir_M(\IE) $
		where  $ Y_j := y_j \mod M^2 $ for $ j \in \{ 1, \ldots, r \} $.
	\end{enumerate}
\end{Prop}

\begin{proof}
	The first part follows from \cite[Corollary~5.4 and Proposition~5.5]{BerndPoly}
	and part \eqref{It:sim_DE} from \cite[Theorem~5.3]{BerndPoly}.
	The last part is a  consequence of \cite[Lemma~5.6]{BerndPoly}.
	(Let us point out that our proofs there hold word-by-word in the case of a polynomial ring.)
\end{proof}

%
%
%
%
%
%
%
%
%
%
%
%
%
%

The aim of the present article is to deduce the invariant of Bierstone and Milman only by considering polyhedra.
Thus, let us recall the author's notion of characteristic polyhedra for pairs and their properties from \cite{BerndPoly}.

\underline{From now on, we fix $ R = K[[T_1, \ldots, T_n]] $.} 
Let $ \IE  = ( J, b ) $ be a pair on $ R $ such that $ x \in \Sing (\IE) $
and let $ (u) = ( u_1, \ldots, u_e)$ be a fixed system of elements in $ M $ which can be extended to a regular system of parameters for $ R $.
We consider various choices for systems $ (y) = (y_1, \ldots, y_r) $ extending $ (u) $ as mentioned.  

Let $ ( f ) = ( f_1, \ldots, f_m ) $ be a system of generators for $ J $.
By \cite[Proposition~2.1]{CPmixed}, 
every element $ f_i $ has a finite expansion 
%
\[	
	f_i = \sum_{ (A,B) \in \IZ^{e+r}_{ \geq 0 } } C_{A,B,i} \, u^A \, y^B
	\quad \quad \mbox{ with } \quad 
	C_{ A, B,i } \in R^\times \cup \{ 0 \}
\]
and the set $ \{ (A,B) \mid C_{A,B,i} \neq 0 \} $ is unique if we assume that the number of elements in this set is minimal. 
For our purposes, the uniqueness will not be needed.  
%

\begin{Def}
\label{Def:PolyNonIntrinsic}
For the given data, we define the {\em Newton polyhedron
		$ \Delta^N (\IE, u, y)  $ 
		of $ \IE = (J,b) $ \wrt $ ( u, y ) $}
	as the convex hull in $ \IR^{e+r}_{ \geq 0 } $ of the set
	\[ 
	\left\{ \frac{ (A,B) }{ b  } + \IR^{e+r}_{ \geq 0 }  \;\bigg|\; 1 \leq i \leq m ,\, C_{ A, B, i }  \neq 0 ,\, | B | \leq b \right\}.
	\] 
	
	The {\em (projected) polyhedron 
$ \Delta (\IE, u, y)  $ 
of $ \IE = (J,b) $ \wrt $ ( u, y ) $} 
is defined as the smallest closed convex subset of $ \IR^e_{ \geq 0 } $ containing all elements of the set
\[ 
	\left\{ \frac{ A }{ b - |B|  } + \IR^e_{ \geq 0 }  \;\bigg|\; 1 \leq i \leq m , \, C_{ A, B, i }  \neq 0 ,\, | B | < b \right\}.
\] 
\end{Def}

If $ \IE' $ is another pair on $ R $ with $ x \in \Sing ( \IE' ) $,
then $ \Delta (\IE \cap \IE', u, y) \subset \IR^e_{ \geq 0 } $ is the smallest closed convex subset containing $ \Delta (\IE, u, y) $ and $ \Delta (\IE', u, y) $.

In \cite[Proposition~3.3]{BerndPoly} it is shown that the polyhedron $ \Delta (\IE,u,y) $ is a projection of the Newton polyhedron $ \Delta^N (\IE,u,y) $.
Furthermore, both polyhedra are independent of the chosen set of generators $ ( f ) = ( f_1, \ldots, f_m ) $ by \cite[Lemma~3.2 and Corollary~3.4]{BerndPoly}. 

The polyhedron $ \Delta (\IE,u,y) $ is not necessarily invariant under the equivalence relation $ \sim $.
Nonetheless, we see later how we can get intrinsic information on the idealistic exponent by using it.
Since it motivates our further investigation, 
let us briefly recall an example, where the polyhedron changes under $ \sim $.

\begin{Ex}[{\cite[Example~3.6]{BerndPoly}}]
\label{Ex:PolyNotUnique}
	Let $ K = \IC $, $ d \in \IZ_+ , d \geq 2 $.
	Consider the following two pairs on $ \IC[[u_1, u_2, y_1, y_2]] $
	\[ 
	\begin{array}{l}
 		\IE_1 = (y_1^d - u_1^{d-1} u_2^{d-1},\, d) \;\cap\; (y_2,\, 1)  	\\[5pt]
 		\IE_2 = (y_1^d - u_1^{d-1} u_2^{d-1},\, d) \;\cap\; (y_2^{d-1}-u_1^{d-2} u_2^{d-1},\, d-1 )
	\end{array} 
	\]
	As shown in loc.~cit., $ \IE_1 \sim \IE_2 $, 
	but we have
	$ \Delta(\IE_1, u, y) \neq \Delta(\IE_2, u, y) $.
	More precisely, 
	$ v := \left(\frac{d-2}{d-1},\, 1\right) \in  \Delta(\IE_2, u, y) $
	and $ v \notin \Delta(\IE_1, u, y) $.
\end{Ex}

An important invariant of the singularity of $ \IE $ at $ x $ is the order of the coefficient pair \wrt a system $ ( y ) $ which determines $ \Dir_M ( \IE ) $.
Using the following definition this can be recovered from the polyhedron $ \Delta (\IE, u, y) $.

\begin{Def}
\label{def:delta(Delta)}
	Let  $ \Delta \subset \IR^e_{ \geq 0 } $ be any subset.
	We define 
	\[ 
		\delta ( \Delta ) := \inf \{ \, |v| = v_1 + \cdots + v_e \mid v = (v_1, \ldots, v_e) \in \Delta  \, \}.
	\]
	If $ \Delta = \Delta (\IE, u, y) $ and $ x \in \Sing ( \IE) $, 
	then we set 
	$ 
		\delta (\IE, u, y) := \delta ( \Delta (\IE, u, y) ) .
	$ 
\end{Def}

\begin{Prop}[{\cite[Lemma 5.2 and Proposition 4.6]{BerndPoly}}]
\label{Prop2.4}
	Let $ \IE $ be a pair on $ R  $ with $ x \in \Sing ( \IE ) $, and 
	let $ ( u ,y ) $ be a {\RSP} for $ R $.
\begin{enumerate}[(1)]
	\item
	The polyhedron $ \Delta (\IE,u,y) $ coincides with the Newton polyhedron of the coefficient pair $ \ID (\IE, u, y)  $.
	In particular, we have
	$
		 \delta (\IE, u, y )
		= \ord_M ( \ID (\IE, u, y)  ) $.
	\item
	For a pair $ \IE_2 $ which is equivalent to $ \IE_1 := \IE $, we have 
	$ 
	 \delta ( \IE_1, u, y)  = \delta ( \IE_2, u, y ) .
	$ 
	\item 
	\label{It:delta_ord_coeff}
	If $ ( u, z ) $ is another choice for the {\RSP} fulfilling the additional property 
	$ (z,1) \cap \IE \sim (y,1) \cap \IE $, then
	$ 
		\delta ( \IE, u, y ) = \delta ( \IE, u, z ) .
	$ 
	In particular, this implies that this number is independent of the choice of a maximal contact subscheme if we fix $ (u) $.
\end{enumerate}
\end{Prop}


\begin{Def}
	Let $ \IE = (J, b) $ be a pair on $ R $ such that $ x \in \Sing ( \IE ) $, and let $ ( u ) = (u_1, \ldots, u_e ) $ be a system of regular elements that can be extended to a {\RSP} of $ R  $.	
	We define
	\[ 
		\Delta (\IE, u) := \bigcap_{(y)} \Delta (\IE ,u, y),
	\]
	where the intersection ranges over all systems $ ( y ) $ extending $ ( u ) $ to a {\RSP} of $ R $.
	We call $ \Delta (\IE, u) $ the {\em characteristic polyhedron of the pair $ \IE $ \wrt $ ( u ) $}.
\end{Def}

\begin{Thm}[{\cite[Theorem~3.15]{BerndPoly}}]
\label{Thm:BerndHiro}
	Let $ \IE = (J, b) $ be a pair on $ R = K[[T_1, \ldots, T_n]] $ such that $ x \in \Sing ( \IE ) $ and
	let $ ( u, y ) = ( u_1, \ldots, u_e; y_1, \ldots, y_r ) $ be a {\RSP} for $ R $ such that the following condition holds:
	\[
	(Y) = (Y_1, \ldots, Y_r ) \mbox{ defines the directrix } 
	\Dir_M ( \IE ), 
	\eqno{(*)}
	\]
	where $ Y_j := y_j \mod M^2 $, for $ j \in \{ 1, \ldots, r \} $.
	Then there exist elements $ ( y^* ) = ( y^*_1, \ldots, y^*_r ) $ in $ R $ such that $ ( u, y^* ) $ is a {\RSP} for $ R $, condition $(*)$ holds for $ ( y^* ) $, and 
	$ 
		 \Delta ( \IE , u, y^* ) = \Delta ( \IE , u ).
	$ 
\end{Thm}

Let us point out that the situation is more general in \cite[Theorem~3.15]{BerndPoly}.
In particular, $ R $ is a regular local ring that is not necessarily complete. 
In order to get the elements $ (y^*) $ one has to pass to the $ M $-adic completion of $ R $.
It is a question for itself to consider whether it is possible to obtain such $ (y^*) $ without passing to the completion, see for example \cite{CPcompl,CSCcompl}.

The assumption $ (*)$ is crucial, see \cite[Example 3.12]{BerndPoly}.
Nonetheless, if $ (Y) $ has to be extended in order to achieve $ (*) $,
we can still recover information on the singularity, see Theorem~\ref{Thm:nuandsoon}\eqref{It:delta=1}.

	As we recalled in Example~\ref{Ex:PolyNotUnique}, $ \Delta (\IE ,u,y) $ (and thus $ \Delta (\IE, u) $) does not behave well under the equivalence relation $ \sim $.
	Nonetheless, the information that we extract from the polyhedron is an invariant of the idealistic exponent $ \IE_\sim $, see Theorem~\ref{Thm:nuandsoon}\eqref{It:delta_inv}.
	In \cite[p.~385]{BerndPoly}, we explain the concept of a characteristic polyhedron for an idealisitic exponent which is independent of the choice of a representative. 
	Since this is not needed here, we refer the reader for further information to \cite{BerndPoly}.

	We need the following theorem which follows from results in \cite{BerndPoly}.

\begin{Thm}
\label{Thm:nuandsoon}
	Let $ \IE = (J,b) $ 
	be a pair on $ R $ such that $ x \in \Sing ( \IE ) $.
	Let $ (u,y) = ( u_1,, \ldots, u_{ d };y_1, \ldots, y_s ) $ be a {\RSP} for $ R $.
\begin{enumerate}[(1)]
	\item\label{It:delta_ind_max} 
	Suppose $ R = K[[u,y]] $ and 
	that there exist differential operators $ \cD_{y,j} $ of order $ b-1 $ only involving $ (y)$ as well as elements $ f_j \in J $ such that $ \cD_{y,j} (f_j) = y_j $ for all $ j \in \{ 1, \ldots, s \} $.  
	Then $ V( y ) $ has maximal contact with $ \IE $ at $ M $ and the polyhedron 
	$ 
		\Delta (\IE, u,y)
	$
	is independent of the choice of $ ( y ) $ with theses properties,
	i.e.,
	if $ (z) $ is another extension of $ ( u ) $ to a {\RSP} for $ R $ which can be determined through differential operators of order $ b-1 $ in $ (z) $, then $\Delta (\IE, u,z) = \Delta (\IE, u,y) $.
	Moreover, if additionally condition $ (*) $ of Theorem~\ref{Thm:BerndHiro} holds,
	then
	$ 
	\Delta (\IE, u,y) = \Delta (\IE, u)
	$. 
	\item\label{It:delta_inv}
	The number
	$  \delta ( \IE, u )  :=  \delta ( \Delta (\IE ,u) ) 
	\in \frac{1}{b!} \,\IZ_{\geq 1} \cup \{\infty\}
	$ 
	does not depend on $ ( y ) $ and is invariant under the equivalence relation $ \sim $.
	Therefore $ \delta ( \IE; u ) $ is an invariant of the idealistic exponent $ \IE_\sim $ and $ ( u ) $.
	\item
	\label{It:delta=1}
	Suppose that $ (y, u_{ e + 1 }, \ldots, u_d ) $, for some $ e < d $ (and possibly renaming $ (u) $), fulfills condition $(*)$ of Theorem~\ref{Thm:BerndHiro}.
	Then we have
	$
		 \delta ( \IE;u_1,\ldots, u_d; y_1, \ldots, y_s ) = 1 
	$.
\end{enumerate}
\end{Thm} 

\begin{proof}
	The proof of Statement (1) follows the same lines as the one for \cite[Proposition~4.4]{BerndPoly}
	where the assumption on $ R $ is slightly different.
	This difference has no effect on the arguments in the proof.
	Statement (2) is shown in \cite[Theorem~4.8]{BerndPoly}
	and Statement (3) is proven in \cite[Lemma~4.9]{BerndPoly}.
\end{proof}

The following notion is the key ingredient to draw the connection to the invariant of Bierstone and Milman.

\begin{Def}
\label{Def:nu}
	Let $ \IE $ be a pair on $ R $ with $ x \in \Sing ( \IE ) $.
	Fix a system of elements $ ( u_1, \ldots, u_d ) $ in $ R $ which can be extended to a {\RSP} for $ R $ and let $ ( y ) = ( y_1, \ldots, y_s ) $ be such an extension of $ (u) $.
	\begin{enumerate}[(1)]
		\item\label{It:d_i}
		For $ i \in \{ 1, \ldots, d \} $, we define 
		\[ 
			d_i ( \IE, u , y  ) := \inf \{ v_i \mid v = ( v_1, \ldots, v_d ) \in \Delta (\IE, u,y) \} \in \frac{1}{b!} \, \IZ_{\geq 0}  \cup \{\infty \}.
		\]
		\item\label{It:nu_I} 
		Let $ I \subseteq \{ 1, \ldots, d \} $ be a subset.
		If $ \Delta (\IE, u,y) \neq \emptyset $, we define
		\[
			\nu_I ( \IE, u, y ) := \delta ( \IE, u, y ) - \sum_{i \in I } d_i ( \IE, u, y ) \in \frac{1}{b!}\, \IZ_{\geq 0} .
		\]
		If $ \Delta (\IE, u,y) = \emptyset $,
		we set $ \nu_I ( \IE, u, y ) := \infty $.
		\item
		Assume that condition $ (*) $ of Theorem~\ref{Thm:BerndHiro} holds for $ ( y ) $ and let $ ( y^* ) $ be a system of elements in $ R $ as in the conclusion of Theorem \ref{Thm:BerndHiro},
		i.e., 
		$  \Delta ( \IE , u , y^* )  = \Delta ( \IE , u ) $.
		For $ i \in \{ 1, \ldots, d \} $ resp.~$ I \subseteq \{ 1, \ldots, d \} $, we introduce
		\[
		d_i ( \IE, u  ) := d_i ( \IE, u ,y^*  )
		\hspace{10pt}\mbox{ and }\hspace{10pt}
		\nu_I ( \IE, u ) := \nu_I ( \IE, u, y^* ) 
		.
		\]
	\end{enumerate}
\end{Def}

It follows from Example \ref{Ex:PolyNotUnique} that $ \nu_I ( \IE, u ) $ and $ \nu_I ( \IE, u, y ) $ may change under the equivalence relation $ \sim $.
Therefore it is not an invariant of the idealistic exponent.
In order to take care of this we introduce the notion of pairs (and idealistic exponents) with history in Section~\ref{Sec:History}.
Since this phenomena only appears if there are exceptional divisors present, we first discuss the case without exceptional divisors in the following section.

\section{The case without exceptional divisors}
\label{Sec:NoExc}

We begin by fixing the setup and discussing the situation for the invariant of Bierstone and Milman if there are no exceptional divisors present. 
The general case with exceptional divisor will be considered in Section~\ref{Sec:BMfull}.

\begin{Setup}
	\label{SetupB}
	Let $ K $ be a field of characteristic zero and $ R = K[[T_1, \ldots, T_n]] $.
	Let $ J \subset R $ be a non-zero ideal
	and let $ (f) = (f_1, \ldots, f_m) $ be a standard basis for $ J $.
	The latter means $ f_1, \ldots, f_m $ are elements in $ J $ fulfilling the properties:
	\begin{enumerate}[(1)]
		\item $ \ini_M(J) = \langle \ini_M(f_1), \ldots, \ini_M(f_m) \rangle $, 
		\item $ \ini_M(f_i) \notin \langle \ini_M(f_1), \ldots, \ini_M(f_{i-1}) \rangle $ for $ i \in \{ 2, \ldots, m \} $, and
		\item $ b_1 \leq \cdots \leq b_m $,
		where $ b_i := \ord_M(f_i) $ for $ i \in \{ 1, \ldots, m \} $.
	\end{enumerate}
	Notice that a standard basis of $ J $ generates $ J $ by \cite[Corollary~(2.21.d)]{HiroCharPoly}.
	For this data, we introduce the pair 
	 $ 
	 			\IE := ( f_1, b_1 ) \cap \ldots \cap ( f_m, b_m ) .
	 $ 
	 
	 Let $ (u,y)  = (u_1, \ldots, u_e; y_1, \ldots, y_r ) $ be a {\RSP} for $ R $ such that the initial forms of $ (y) $ determine the directrix $ \Dir_M(\IE) $
	 and such that $ V(y) $ has maximal contact with $ \IE $. 
	 Additionally, suppose that $ (f) $ is normalized in the sense of \cite[Definition~1.13(2)]{CSCcompl}.
\end{Setup}

For our purpose it is not crucial to give the precise definition of being normalized.   
Therefore we skip this quite technical definition.
We only mention that the assumptions imply (using \cite[Theorem~(4.8)]{HiroCharPoly})
that $ \Delta (\IE,u,y) = \Delta(\IE,u) = \Delta(J,u) $,
where the latter is Hironaka's original characteristic polyhedron \cite{HiroCharPoly}.

In \cite[Lemma 3.1.5]{BerndThesis}, the author proved that the conditions of Setup \ref{SetupB} imply the original assumptions of Bierstone and Milman \cite[(7.2), p.~262.]{BMheavy}.
Since this part is not essential for the construction of the invariant we do not recall all the technical notation and refer the interested reader to \cite[Section 3.1]{BerndThesis}.

%
%
%
%
%
%
%
%
%
%
%
%
%
%

\begin{Constr}
	\label{Constr:NoExc}
	Let the situation be as in Setup \ref{SetupB}.
	Define
	\[ 
	\cG_1 := \IE =  ( f_1, b_1 ) \cap \ldots \cap  ( f_m, b_m ) .
	\]
	Choose $ i_0 \in \{ 1,\ldots, m\} $.
	Then $ \ord_{M} (f_{i_0}) = b_{i_0} $ and for simplicity, we write $ (f, b) $ instead of $ (f_{i_0}, b_{i_0}) $.
	After a linear coordinate change we may assume that $\dfrac{ \partial^{b} f }{ \partial y_{1}^{b} } \neq 0$.
	By applying the Weierstrass preparation theorem \cite[Theorem~9.2]{Lang}, 
	we may assume without loss of generality that $ f $ is of the form $ f = y_1^b + \sum\limits_{q=0}^{b-1} y_i^q \phi_q $. 
	Set 
	\[
	z_1 : = \frac{1}{b!} \cdot \frac{ \partial^{ b -1 } f }{ \partial y_{ 1 }^{ b-1 } } = y_1 + \frac{\phi_{b-1}}{b}
	\ \ \ 
	\mbox{ and }
	\ \ \
	N_1 := V( z_1 ) .
	\]
	By the Diff Theorem, Proposition \ref{Prop:basicandmore}\eqref{It:Diff}, we have
	$ 
	\cG_1  \sim \cG_1  \cap ( z_1, 1 )
	$ 
	and $ N_1  $ has maximal contact with $ \cG_1 $.
	Note that $ (u, z_1, y_2, \ldots, y_r ) $ is a {\RSP} for $ R $.
	In the next step we focus on the situation in $ N_1  $.
	Define the pair $ \cH_1  $ (on $ N_1 $) by
	\[
	\cH_1 := \bigcap_{j=1}^{m^{(2)}} (h_j, b_{h_j}) :=  \bigcap_{i = 1}^m \; \bigcap_{ q := q(i) = 0 }^{ b_i - 1 } \, \left( \, \frac{ \partial^q f_i }{ \partial y_1^q } \Big|_{ N_1 }, \, b_i - q \, \right) .
	\]
	This is the coefficient pair of $ \cG_1 $ \wrt $ ( z_1 ) $ (Definition \ref{Def:IdCoeffExp}).
	Define
	\[
	\mu_2 := 
	\ord_{M} ( \cH_1 ).
	\]
	For simplicity, we abuse notation here (and in the following)
	by writing $ \ord_M(.) $ even though, 
	to be precise, 
	it would be $ \ord_{M/\langle z_1 \rangle}(.) $. 
	Since there are no exceptional divisors present,
	 we define $ \nu_2 := \mu_2 $
	 and
	\[ 
	\cG_2 :=
	 \bigcap_{ j = 1 }^{m^{(2)}}  (g_j,c_j)  
	 :=	 \bigcap_{ j = 1 }^{m^{(2)}} \left( h_j, b_{h_j} \nu_2 \right) .
	\]
	This completes the first cycle of the process without exceptional divisors.
	We repeat with $ \cG_2   $ taking the role of $ \cG_1   $.
	By construction, there exists $ i_1 \in \{1, \ldots, m^{(2)} \} $ such that $ \ord_{ M } ( g_{i_1} ) = c_{i_1} $.
\end{Constr}

\begin{Obs}
	By construction, $ \cH_r  = \ID ( \IE, u, z ) $ and thus we have
	\[  
	\nu_{ r + 1} = \mu_{ r + 1}  = \delta ( \IE, u ) .
	\]
	
	Set $ (w_1, \ldots, w_n) = (u,y) $.
	Since $ \mu_{s + 1} = \ord_M (\cH_s  ) = \delta (\cG_1; w_1, \ldots, w_{ n -s } ; z_1, \ldots, z_s ) $ (with the abuse of notation mentioned in Construction~\ref{Constr:NoExc}), 
	we get (by Theorem~\ref{Thm:nuandsoon}\eqref{It:delta=1}) 
	\[  
		\mu_{s + 1 } = 1 = \delta ( \IE; w_1, \ldots, w_{ n- s } ) 
		\quad 
		\mbox{ if } s < r  \  (\mbox{or equivalently if }  d := n- s > e).
	\] 
	
	After $ r $ steps we start over with $ \cG_{r + 1 } $ instead of $ \cG_1 $.
	We determine the directrix $ \Dir_M ( \cG_{ r + 1 }  ) $, distinguish $ ( u ) = (u_1, \ldots, u_e ) $ into 
	$
	( u ) =  ( \,u_1,\, \ldots,\, u_{ e^{ (2) }  };\, y_{ r + 1 },\, \ldots,\, y_{ r^{ ( 2 ) } }\, ),
	$
	and so on.
\end{Obs}

\begin{Cor}
	\label{Cor:InvZero}
	Let the situation be as in Setup \ref{SetupB}.
	and use the notation of Construction~\ref{Constr:NoExc}.
	In the case without exceptional components the invariant of Bierstone and Milman has the following form
	\[ 
	\begin{array}{cl}
		\inv_X ( x ) 
		& 
		= ( \nu_1, s_1;\, \nu_2, s_2; \, \ldots) =
		\\[5pt]
		& 
		= ( \nu_1, 0;\, 1, 0;\, \ldots;\, 1, 0;\, \nu_{ r^{(1)} + 1 }, 0;\, 1, 0;\, \ldots;\, 1, 0;\, \nu_{ r^{ ( 2 ) } + 1 }, 0;\, 1, 0;\, \ldots),
	\end{array} 
	\]
	where $ r^{ ( 1 ) } := r $ and $ r^{ ( q ) } $, $ q \geq 3 $, is defined in the analogous way as $ r^{ ( 2 ) } $ above.
	Set $ r^{ ( 0 ) } := 0 $.
	Then we have $ 0 = r^{ ( 0 ) } < r^{ ( 1 ) } < r^{ ( 2 ) } < \cdots \leq n $ and, for all $ q \geq 1 $, $ (Y_{ r^{ ( q - 1 ) } + 1 }, \ldots, Y_{ r^{ ( q ) } } ) $ yields $ \Dir_M ( \cG_{ r^{ ( q - 1 ) } + 1 }) $ and
	introducing $ e^{ ( q ) } := n - r^{ ( q ) } $, we get:
	\[
	\nu_{ r^{ ( q ) } + 1 } \,=\, \delta ( \cG_{ r^{ ( q - 1 ) } + 1 } ; \, u_1, \ldots, u_{ e^{ (q ) } } )  \, > \, 1.
	\]
\end{Cor}

Recall that 
$
\delta (\cG_{ r^{ ( q - 1 )} + 1 }  ;  u_1, \ldots, u_{ e^{ (q ) } }  ) 
$
is an invariant of the idealistic exponent $ \cG_{ r^{ ( q - 1 )} + 1 }  $ and the system $ ( u_1, \ldots, u_{ e^{ (q ) } }) $ coming from the respective polyhedron (Theorem \ref{Thm:nuandsoon}\eqref{It:delta_inv}).
Hence we obtained Theorem \ref{MainThm:nuPurelyPoly} in the special case without exceptional divisors.
Furthermore, we have: 

\begin{Prop}
	\label{Prop:MainThmNoExc}
	Let the data be as in Setup \ref{SetupB} and use the notation of Construction~\ref{Constr:NoExc} and Corollary~\ref{Cor:InvZero}.
	Fix $ q \geq 1 $. 
	Let $ g_1, \ldots, g_\ell, d_1, \ldots, d_\ell $ be the elements obtained from $ \cG_1 $ following Construction~\ref{Constr:NoExc}  with the property
	\[
		\cG_{ r^{ ( q - 1 )} + 1  } = \bigcap_{i=1}^\ell \, (g_i, d_i)
		\quad \quad \mbox{(pair on $ V ( z_1, \ldots,  z_{r^{(q-1)}} ) $)}
	\]
	where $ d_i \leq \ord_M(g_i) $ for $ i \in \{ 1 , \ldots, \ell \} $.
	Then each $ g_i $  
	has an expansion of the form
	\begin{equation}
		\label{eq:ExpanFi}
		g_i = g_i ( u^{ (q) }, y^{(q)} ) = G_i( y^{(q)} )+ \sum_{|B| < d_i} G_{B,i}( u^{ (q) } )  \cdot  ( y^{(q)} )^B + g_i^\ast (u^{ (q) }, y^{ (q) } ),
	\end{equation}
	where $ ( u^{ ( q ) }, y^{ ( q ) } ) = ( u_1, \ldots, u_{ e^{ ( q ) } } ; y_{ r^{ ( q - 1 ) } + 1 }, \ldots, y_{ r^{ ( q ) } } ) $,
	we have  $ B = B(i) \in \IZ^{ r^{ ( q ) } - r^{ ( q - 1 ) } }_{\geq 0} $,
	and 
	$ g_i^\ast (u,y)  \in \langle y^{ ( q ) } \rangle^{ d_{ i } } \cap M^{ d_{ i } + 1  } $, 
	\begin{enumerate}[(1)]
		\item\label{It:NoExcHomog} 
		$ G_i ( y^{ ( q ) }  ) \in \resfield [ y^{ ( q ) } ] $ is a polynomial homogeneous of degree $ d_i $ if $ \ord_M(g_i) = d_i $
		and otherwise $ G_i ( y^{ ( q ) }  ) $ is the zero polynomial, 
		
		\item\label{It:NoExcHPolyPoart} 
		$ G_{B,i}(   u^{ ( q ) }   ) \in \resfield [[  u^{ ( q ) }  ]] $ with $ \ord_M ( \,G_{ B, i } \, ) > d_i -|B|$.
	\end{enumerate}
	Moreover, the following properties hold: 
	\begin{enumerate}[(1)]
		 \setcounter{enumi}{2}
		\item
		\label{It:NoExcNextStep}
		$ \displaystyle	\cH_{r^{ ( q ) } } = \bigcap_{i=1}^\ell \bigcap_{B  : |B| < d_i}  \left(\, G_{B,i}(    u^{ ( q ) }   ),\; d_i -|B| \,\right) $, 
		\\[5pt]
		$ \displaystyle \cG_{ r^{ ( q ) } + 1 } =  \bigcap_{i=1}^\ell \bigcap_{B  : |B| < d_i}  \left(\, G_{B,i}(   u^{ ( q ) }   ),\; ( d_i -|B|)\cdot \delta^{ ( q ) } \,\right) $, 
		\\[5pt]
		$ \nu_{ r^{ ( q ) }  + 1 } = \min \left\{\, \dfrac{\ord_M (\,G_{B,i} \,)}{ d_i -|B|} \,\;\Big|\,\; i, B : |B| < d_i \,\right\} = \delta^{ ( q ) }> 1 $,
		\\[5pt]
		where $ \delta^{ ( q ) } := \delta (  \cG_{ r^{ ( q - 1 ) } + 1  },\, u^{ ( q  ) }  ) =  \delta ( \Delta(\, \cG_{ r^{ ( q - 1 ) } + 1  } ,\, u^{ ( q  ) } \, ) ) $.
		\item
		\label{It:NoExcNextPoly}		
		The polyhedron $ \Delta ( \cG_{ r^{ ( q ) } + 1  }  , u^{ ( q + 1 ) } ) $ 
		is a projection of $ \Delta ( \cG_{ r^{ ( q - 1  ) } + 1  }  ,u^{ ( q ) } ) $. 
	\end{enumerate}

	On the other hand, let $ s \in \IZ_ + $ with $ r^{ ( q - 1 ) } < s <  r^{ ( q ) } $.
	Set $ (     u^{ ( q , s )  }    ) := ( u^{ ( q ) }, y_{ s + 1 }, \ldots, y_{ r^{ ( q ) } } ) $ and 
	$ (     y^{ ( q , s )  }    ) := ( y_{  r^{ ( q - 1 ) } + 1 }, \ldots, y_s ) $.
	The statements analogous to (\ref{eq:ExpanFi}) and \eqref{It:NoExcHomog}--\eqref{It:NoExcNextPoly} are true for $ (     u^{ ( q , s )  },  y^{ ( q , s )  }    ) $ instead of $ ( u^{ ( q ) }, y^{ ( q ) } ) $ 
	using  
	$ \delta^{ ( q , s )  } := \delta (  \Delta (  \cG_{ r^{ ( q - 1 ) } + 1  } ,\, u^{ ( q,s  )},\, y^{ ( q , s ) }  ) )  = 1 $.
\end{Prop}

\begin{proof}
	Assertions \eqref{It:NoExcHomog}, \eqref{It:NoExcHPolyPoart} and $ \delta^{ ( q ) } > 1  $  follow since $ ( Y^{ ( q ) } ) = (Y_{ r^{ ( q - 1 ) } + 1 }, \ldots, Y_{ r^{ ( q ) } } ) $ yields $ \Dir_M ( \cG_{ r^{ ( q - 1 ) } + 1 }  ) $.
	Part \eqref{It:NoExcNextStep} is a consequence of the definition of the coefficient pair (Definition \ref{Def:IdCoeffExp}) and the construction of  $	\cH_{r^{ ( q ) } }, \cG_{ r^{ ( q ) } + 1 }  $ and $ \nu_{ r^{ ( q ) }  + 1 } $ (Construction \ref{Constr:NoExc}).
	Since $ V( y^{ ( q ) } ) $ has maximal contact (which is determined through differential operators), the polyhedron 
	$ 
	\Delta ( \cG_{ r^{ ( q ) } + 1  }  ; u^{ ( q + 1 ) };  y_{ r^{ ( q - 1 )} + 1 },\, \ldots,\,  y_{ r^{ ( q  )} } ) $
	is minimal
	and hence equal to $  \Delta ( \cG_{ r^{ ( q ) } + 1  }  , u^{ ( q + 1 ) }  )  $ (Theorem~\ref{Thm:nuandsoon}\eqref{It:delta_ind_max}) which implies \eqref{It:NoExcNextPoly}.
	
	The proof of the last part for $ s \in \IZ_+ $ with $ r^{ ( q - 1 ) } < s <  r^{ ( q ) } $ is clear.
\end{proof}

\begin{Rk}
	A similar description of $ \cH_r  $ as above (with $ \nu_1 = \nu_1 ( x) = H_{X,x} $ the Hilbert-Samuel function of $ X = V(J) $) has already been proven in \cite[Construction 4.18 and Theorem 9.4]{BMheavy}.
	Further, they show how to get their invariant in the case without exceptional components by using ``weighted initial exponents'' and the ``weighted diagram of initial exponents'', see \cite[Remark~3.25]{BMheavy}.
\end{Rk}

So far we have to determine the elements $ ( g ) $ giving $ \cG_{ r^{ ( q - 1 )} + 1  } $ step-by-step and apply Proposition~\ref{Prop:MainThmNoExc}.
By introducing weights on the {\RSP} $ ( u, y) $ we are 
able to extend this result such that we get similar statements only with the use of the generators $ ( f ) $ of $ J $.
Polyhedra of pairs in the weighted-homogeneous setting are discussed in \cite[Remark~3.17]{BerndPoly}.

\begin{Obs}[A glimpse into the weighted setting]
	\label{Obs:quasihomogeneous}
	Let 
	$ \cG_1  = \bigcap_{i=1}^m ( f_i, b_i )  $ be as in Construction \ref{Constr:NoExc}.
	If we neglect exceptional components, we have
	\[ 
	\inv_X  (x)
	= ( \nu_1, 0;\, 1, 0;\, \ldots;\, 1, 0;\, \nu_{ r^{(1)} + 1 }, 0;\, 1, 0;\, \ldots;\, 1, 0;\, \nu_{ r^{ ( 2 ) } + 1 }, 0;\, 1, 0;\, \ldots).
	\]
	Set $ \delta^{(1)} :=  \nu_{ r^{(1)} + 1 } > 1  $.
	At the beginning we separated the {\RSP} of the regular local ring $ R $ into $ ( u, y ) $, where the initial forms of $ ( y ) = ( y_1, \ldots, y_r ) $ build a minimal generating set for the ideal of $ \Dir_M ( \cG_1  ) $.
	The latter is the directrix associated to the homogeneous ideal 
	$ 
	\dBu{ I }{ 0 } := \langle \ini_M ( f_i, b_i ) \mid  i \in \{ 1, \ldots, m \} \rangle = \ini_M(J) .
	$
	
	Up to now the images of $ ( u, y ) $ in the graded ring $ \gr_\maxIdeal ( R ) $ were equipped with the standard grading.
	In other words, we considered the weight $ \omega_0 $ on $ R $ given by
	$$
	\omega_0 ( y_j ) = \omega_0 (u_i) = 1
	\hspace{10pt} \mbox{ and } \hspace{10pt}
	\omega_0 ( \lambda ) = 0
	$$
	for $ j \in \{ 1, \ldots, r  \} $, $ i \in \{ 1, \ldots, e \} $ and $ \lambda \in R^\times $.
	Recall that $ \dBu{ r }{ 1 } = r $ and $ \dBu{ e }{ 1 } = e $.
	We now introduce the weight $ \omega_1 $ on $ R $ given as follows
	$$
	\omega_1 ( y_j ) = 1,
	\hspace{10pt}
	\omega_1 (u_i) = \dfrac{ 1 }{ \dBu{ \delta }{ 1 } }
	\hspace{10pt} \mbox{ and } \hspace{10pt}
	\omega_1 ( \lambda ) = 0
	$$
	for $ j \in \{ 1, \ldots, \dBu{ r }{ 1 }  \} $, $ i \in \{ 1, \ldots, \dBu{ e }{ 1 } \} $ and $ \lambda \in R^\times $.
	We have
	$  \omega_1 (u^A y^B) = \frac{|A|}{\delta^{(1)}} + |B| $.
	
	Observe that $ \omega_1 ( f_i ) = b_i $ for $ i \in \{ 1, \ldots, m \} $
	and we have $ \omega_1 (u^A y^B) = b_i $ if and only if 
	$ \frac{ | A | }{ b_i - | B | } =  \dBu{ \delta }{ 1 }  $
	if $ b_i - | B | \neq 0 $.
	Denote by $ \ini_{\omega_1} (f_i) $ the {\em $ \omega_1 $-initial form} of $ f_i $ which is the sum over those terms where $ \omega_1 $ attains its minimum.
	Notice that $ \ini_{\omega_1} (f_i) $ is equal to the $ \delta^{(1)} $-initial form of $ f_i $
	which is the sum over those terms contributing to the face of the Newton polyhedron determining $ \delta^{(1)} $.
	
	Consider the weighted-homogeneous ideal
	$
	\dBu{ I }{ 1 } :=  \langle \, \ini_{\omega_1} (f_i)  \mid i \in \{ 1, \ldots, m \} \, \rangle
	$.
	The notion of the directrix (Definition~\ref{Def:directrix_both}\eqref{It:DefDirI}) can be extended to weighted-homogeneous ideals. 
	Using our notion of the directrix of a pair (Definition~\ref{Def:directrix_both}\eqref{It:DefDirIE}), we can give an ad-hoc definition in characteristic zero:
	Choose $ (y) $ such that $ V(y) $ has maximal contact with $ \cG_1 $.
	Let $ \bigcap_{\iota \in \cI} (G_\iota, c_\iota) 
	:= \ID ( \bigcap_{i=1}^m (  \ini_{\omega_1} (f_i) ,  b_i  ) ;U;Y) $.
	Then we set
	\[
		\Dir_M(I^{(1)})
		:= 
		\Dir_M \Big(  
		(  \langle Y_1, \ldots, Y_r \rangle , \,  1 ) 
		\, \cap \, 
		\bigcap_{\iota \in \cI} \, (  G_\iota , \, c_\iota \cdot \delta^{(1)} )
		\Big).
	\]
	Using the notation of Proposition~\ref{Prop:MainThmNoExc},
	one can show that 
	$ \Dir_M(I^{(1)}) $ is determined by $ (y^{(12)} ) := (y^{(1)}, y^{(2)} ) = (y_1, \dots, y_{r^{(2)}}) $.
	Then, for each $ i \in \{ 1, \ldots, m \} $, 
	we can expand $ f_i $ \wrt $ (  \dBu{u}{2},  \dBu{y}{12} ) $ as
	$$
	f_i = f_i  (  \dBu{u}{2},  \dBu{y}{12} ) 
	= F_i( \dBu{y}{12} )
	+ \sum_{ \omega_1 ( {\dBu{y}{12}}^B ) < b_i} F_{B,i} ( \dBu{ u }{ 2 } ) \cdot {\dBu{y}{12}}^B + f_i^\ast ( \dBu{u}{2},  \dBu{y}{12}  ),
	$$
	where $  f_i^\ast ( \dBu{u}{2} ,  \dBu{y}{12}  ) \in \langle {\dBu{y}{12}}^B  \mid  \omega_1 ( {\dBu{y}{12}}^B ) \geq b_i  \rangle \cap \langle 
	g \in R \mid \omega_1 (g) \geq b_i+1 
	\rangle $,
	we have $ B = B(i)\in\IZ^{ r^{ ( 2 ) } }_{\geq 0} $
	and the following properties hold:
	\begin{enumerate}[(1)]
		\item  $ F_i ( \dBu{y}{12}  ) \in \resfield [ \dBu{y}{12} ] $ is a polynomial which is weighted-homogeneous of degree $ b_i $ (\wrt $ \omega_1 $).
		\item $ F_{B,i}(   \dBu{u}{2}  ) \in \resfield [[ \dBu{u}{2}   ]] $ and $ \omega_1( \,F_{ B, i } \, ) > b_i - \omega_1 ({\dBu{y}{12}}^B) $.
		\item 
		$	\displaystyle 
		\cH_{r^{ ( 2 ) } }  
		= \bigcap_{i=1}^m \bigcap_{B : \omega_1 ({\dBu{y}{12}}^B) < b_i}  
		\, \left(\, F_{B,i}(   \dBu{u}{2}  ) ,\; b_i - \omega_1 ({\dBu{y}{12}}^B) \,\right)  $,
		\\[5pt]
		$ \displaystyle 
		\cG_{ r^{ ( 2 ) } + 1 }  
		= \bigcap_{i=1}^m \bigcap_{B : \omega_1 ({\dBu{y}{12}}^B) < b_i}
		 \, \left(\, F_{B,i}(   \dBu{u}{2}  )   ,\; ( b_i - \omega_1 ({\dBu{y}{12}}^B))\cdot \delta^{ ( 2 ) } \,\right)  $, 
		\\[5pt]
		$ \nu_{ r^{ ( 2 ) }  + 1 } = \min \left\{\, \dfrac{ \omega_1 ( F_{ B, i } )}{ b_i - \omega_1 ({\dBu{y}{12}}^B)} \,\;\Big|\,\; i, B : \omega_1 ({\dBu{y}{12}}^B) < b_i \,\right\} = \delta^{ ( 2 ) }> 1 $,
		\\[5pt]
		where we further have
		$$ 
		\delta^{ ( 2 ) } = \delta (  \cG_{ 1  } , u^{ ( 2  ) } ,  \dBu{y}{12}  ) :=  \delta ( \Delta^{(1)} (\cG_{ 1  }  , u^{ ( 2  ) } ,  \dBu{y}{12}  ) ).
		$$
		Here $ \Delta^{(1)} ( \cG_{ 1  }  ,u^{ ( 2  ) } ,  \dBu{y}{12}  ) $ denotes the polyhedron in the weighted-homogeneous setting induced by $ \omega_1 $
		which is obtained by substituting 
		$ \dfrac{ A }{ b - |B|  } $ 
		by 
		\[
			\dfrac{ \ 
				\left( \dfrac{A_1}{\delta^{(1)}}, \ldots, \dfrac{A_{e^{(2)}}}{\delta^{(1)}}  \right) 
			 \ }{ b - \omega_1({\dBu{y}{12}}^B)  }.
		\]
		in the definition of the projected polyhedron (Definition~\ref{Def:PolyNonIntrinsic}).
	\end{enumerate}

	From here, we go on and repeat our considerations using the weight $ \omega_2 $ on $ R $ given by
	\[
	\begin{array}{ll}
		\omega_2 ( y_j ) = 1 ,
		& 
		\mbox{ if } j \in \{ 1, \ldots, \dBu{r}{1} \},
		\\[5pt]
		\omega_2 ( y_j ) = \dfrac{ 1 }{ \dBu{ \delta }{ 1 } } ,
		& 
		\mbox{ if } j \in \{ \dBu{r}{1} + 1 , \ldots, \dBu{r}{2} \} ,
		\\[5pt]
		\omega_2 (u_i) = \dfrac{ 1 }{ \dBu{ \delta }{ 1 } \, \dBu{ \delta }{ 2 }  }  ,
		& 
		\mbox{ for } i \in \{ 1, \ldots, \dBu{e}{2} \} ,
		\\[5pt]
		\omega_2 ( \lambda ) = 0 ,
		& 
		\mbox{ for } \lambda \in R^\times .
	\end{array} 	
	\]
	This will lead to a variant of Proposition \ref{Prop:MainThmNoExc} where we only use the generators $ ( f ) $ of $ J $.
\end{Obs}

In the upcoming work \cite{AQS} this weighted version of the characteristic polyhedron will play a crucial role for defining invariants of singularities over perfect fields of arbitrary characteristic.
Hence, further details on this perspective are postponed to \cite{AQS}.

A different weighted variant of the characteristic polyhedron appears in the characterization of quasi-ordinary hypersurface singularities by Mourtada and the author in \cite{MS}.

%
%
%
%
%
%
%
%
%
%
%
%
%
%

\section{Pairs and idealistic exponents with history}
\label{Sec:History}
In this section we introduce a refinement of pairs taking into account the exceptional data. 
We focus on the situation $ R = K[[T_1, \ldots, T_n]] $ 
and $ x \in \Spec (R) $ is the closed point corresponding to $ M = \langle T_1, \ldots, T_n \rangle $. 

\begin{Obs}
\label{Obs:OneBlowUpExcept}
	Let $ \IE = ( J, b) $ be a pair on $ R $ with $ x \in \Sing ( \IE ) $ and 
	let $ ( u, y ) = (u_1, \ldots, u_d; \, y_1, \ldots, y_s ) $ be a {\RSP} for $ R $ such that $ (y, u_{ e + 1 }, \ldots, u_d ) $, $  e \leq d $, defines the directrix $ \Dir_M ( \IE ) $.	
	\begin{enumerate}[(1)]
		\item
		Suppose that $ V (  y_1, \ldots, y_s ) $ has maximal contact with $ \IE $ at $ M $ (Definition~\ref{Def:maximal}).
		Let $ D = V(y,u_i \mid i \in I_D ) $, with $ I_D \subseteq \{ 1, \ldots, d \} $, be a permissible center for $ \IE $
		(Definition~\ref{Def:idexp_sing_perm}\eqref{It:permissible}).
		Set 
		\[  
			\delta_D  (\IE, u, y)  := 
			\min \left\{ \sum_{i \in I_D} v_i \mid v = ( v_1, \ldots, v_d ) \in \Delta (\IE, u, y) \right\}
			.
		\]
		Since $ D $ is permissible for $ \IE $, we have $ \delta_D  (\IE, u, y) \geq 1 $.
		
		Let $ \IE' $ be the transform of $ \IE $ under the blow-up with center $ D $ at a point $ x' $ lying above $ x $. 
		Let $ (u',y') $ be a regular system of parameters at $ x' $.
		Suppose the exceptional divisor is locally given by $ V ( u_j' ) = V(u_j) $.
		By using that $ V ( y ) $ has maximal contact, an easy computation shows that
		(for details see \cite[Observation~2.5.10]{BerndThesis})
		\[
			d_{ j } (\IE', u', y') = \delta_D  (\IE, u, y)  - 1,
		\]
		where $ d_{ j } (\IE', u', y') $ is introduced in Definition~\ref{Def:nu}\eqref{It:d_i}.
		\item
		During the resolution process we have to deal with the exceptional divisors $ E = \{ H_1, \ldots, H_\alpha \} $.
		We require on the resolution algorithm that the divisors associated to $ E $ have at most simple normal crossing singularities, i.e., each irreducible component is regular and they intersect transversally.
		Hence every $ H_i $ with $ x \in H_i $ is locally given by $ V ( T_i ) $, for suitably chosen $ (T_1, \ldots, T_n) $. 
	\end{enumerate}
\end{Obs}

The observation implies the following:
If  $ I \subseteq \{ 1, \ldots, d \} $ in Definition \ref{Def:nu} is the subset determined by the exceptional divisors containing the point $ x $, then the numbers $ d_i ( \IE, u, y ) $ are characterized by the preceding resolution process.
Therefore
\[ 
	\nu_I ( \IE, u, y ) = \delta ( \IE, u, y ) - \sum_{i \in I } d_i ( \IE, u, y ) \in \frac{1}{b!} \, \IZ^d_{\geq 0} 
\]
is an invariant depending on the preceding resolution process.

\begin{Def}
\label{Def:IEH}
	Let $ \IE = (J,b) $ be a pair on $ R $ with  $ x \in \Sing ( \IE ) $ 
	and let $ E = \{ H_1, \ldots, H_\alpha \} $, $ \alpha \in \IZ_+ $, be a set of irreducible divisors on $ \Spec(R) $ such that the associated divisor has at most simple normal crossing singularities.
	Let $ (u, y) = (u_1, \ldots, u_d; y_1, \ldots, y_s) $ be a regular system of parameters for $ R $ such that $ (y, u_{ e + 1 }, \ldots, u_d ) $, $  e \leq d $, defines $ \Dir_M ( \IE ) $. 
	\begin{enumerate}[(1)]
\item	
	We define the {\em exceptional data of $ \IE $ at $ x $ on $ V (y) $ associated to $ E $}
	as
	the map
	\[  
		\cE :=\cE_{ E, V(y) } (x) := \cE_E ( \IE, u, y ) \colon E \to \IR_{\geq 0}  
	\]
	where 
	\[ 
		\cE(H_i) 
		:= 
		d_i 
		:= 
		\begin{cases}
			d_i (\IE,u,y) & 
			\mbox{ if } x \in H_i \mbox{ and } V(y) \not\subset H_i, 
			\\
			0 
			& 
			\mbox{ if } x \notin H_i \mbox{ or } V(y) \subset H_i.  
		\end{cases}
	\] 
	This induces a map $ \cE_E  \colon 	\Sing ( \IE ) \to \{ \phi \colon  E \to \IR_{\geq 0} \} $.
	We call
	$
		(\IE,\cE_E) = ((J,b),\cE_E)
	$
	a \textit{pair with history} on $ R $ (resp.~on $ \Spec(R)$).
	If there is no confusion possible we skip the reference to $ E $ and write 
	$
	(\IE,\cE) .
	$ 

\item
	Let $ \IE_1 \sim \IE_2 $ be two {equivalent} pairs on $ R $ and $ E $ as above.
	The induced pairs with history $ (\IE_1,\cE_1) $ and $ (\IE_2,\cE_2) $ (with the same $ E $)
	are defined to be {\em equivalent at $ x \in \Sing ( \IE_1 ) = \Sing ( \IE_2 ) $ \wrt $ ( u, y ) $} if 
	$ \cE_1 (x) = \cE_2 (x) $.
	In particular, the assigned numbers $ d_j $ coincide.
	In this case we write $ \IE_1 \simHx \IE_2 $.
	
	We say $ (\IE_1,\cE_1) $ and $ (\IE_2,\cE_2) $ on $ R $ are \emph{equivalent} (with respect to $ (u,y) $), 
	if they are equivalent at any $ x \in \Sing ( \IE_1 ) = \Sing ( \IE_2 ) $ and we write $ \IE_1 \simH \IE_2 $.
	An \emph{\IEH} $ (\IE,\cE)_{\simH} $ denotes the equivalence class of a pair with history $ (\IE,\cE) $ \wrt the equivalence relation $ \simH $.
\end{enumerate}
\end{Def}

The definition of exceptional data presented here is slightly different but more precise than the one given in \cite{BerndThesis}.

Notice that $ \cE= \cE_{E,V(y)} (x) $ is the exceptional data on $ V(y) $, i.e., for $ H_i \cap V(y) $ with $ i \in \{ 1, \ldots, \alpha \} $. 
Therefore, any divisor $ H_i $ with $ V(y) \subset H_i $ gets mapped to zero along $ \cE $ since $ H_i \cap V(y)  = V(y) $ is not a divisor in $ V(y) $.

\begin{Def}
\label{Def:IEH_number}
	Let $ (\IE,\cE_E) $ be a pair with history on $ R $
	with $ E = \{ H_1, \ldots, H_\alpha \} $
	and $ (u,y) $ be as in Definition~\ref{Def:IEH}.
	We define
	\begin{equation}
		\label{eq:def_nu}  
		  \nu ( \IE, \cE_E, u, y ) :=
		  \left\{
		  \begin{array}{cl}
		  	\displaystyle 
		  	 \delta ( \IE, u ) - \sum_{ i = 1 }^\alpha d_i  \in \frac{1}{b!} \, \IZ_{\geq 0}, & \mbox{ if } e = d , \\ 
		  	 \displaystyle 
		   \delta ( \IE, u,y ) - \sum_{ i = 1 }^\alpha d_i  \in \frac{1}{b!} \, \IZ_{\geq 0}, & \mbox{ if } e < d.
		   \end{array}
		   \right.
	\end{equation}
	If $ \{ H_i \in E \mid x \in H_i \mbox{ and } V(y)\not\subset H_i \}  $ is locally at $ x $ defined by a subset $ I(\cE) \subset \{ 1, \ldots, d \} $ (i.e., $ (H_i)_x = V( u_{j(i)}) $, for some $ j(i) \in \{ 1, \ldots, d \} $, for those $ i $), then we have
	$
		 \nu ( \IE, \cE_E, u, y )  = \nu_{I(\cE)} ( \IE,u , y )
	$	
	(Definition~\ref{Def:nu}\eqref{It:nu_I}).
\end{Def}

Note that the distinction in \eqref{eq:def_nu} is required 
since $ \delta (\IE,u) $ is not defined if $ e < d $
(see the comment on assumption $ (\ast) $ after Theorem~\ref{Thm:BerndHiro}).
On the other hand, we know $ \delta ( \IE, u,y ) = 1 $ if $  e < d $ by Theorem~\ref{Thm:nuandsoon}\eqref{It:delta=1}.

\begin{Def}
	Let $ (\IE,\,\cE=\cE_E) $ (with $ E = \{ H_1, \ldots, H_\alpha \} $) be a {\IPH} on $ R $.
	A blow-up $ \pi \colon Z' \to Z $ with center $ D \subset Z $ is called {\em permissible for $ (\IE,\cE) $}, if the following conditions hold:
	\begin{enumerate}[(1)]
		\item	$ \pi $ is permissible for $ \IE $ (Definition~\ref{Def:idexp_sing_perm}\eqref{It:permissible}).
		\item	$ D \cup H_1 \cup \cdots \cup H_\alpha \subset Z $ have at most simple normal crossing singularities.
	\end{enumerate}
	The transform of $ (\IE, \cE_E) $ under a permissible blow-up $ \pi $ is given by 
$ ( \IE', \cE'_{E'} ) $, where $ \IE' $ denotes the transform of $ \IE $ under $ \pi $ 
and the exceptional data map $ \cE'_{E'} $ is defined by 
$ E' := \{ H'_1, \ldots, H'_\alpha, H_{\alpha + 1 } \} $.
	Here $ H'_i $ is the transform of $ H_i $ under the blow-up $ \pi $ 
and $ H_{ \alpha + 1 } $ is the exceptional divisor corresponding to $ \pi $.
\end{Def}

\begin{Rk}
	Reconsider Example~\ref{Ex:PolyNotUnique}.
	We have 
	$ \IE_1 = (y_1^d - u_1^{d-1} u_2^{d-1},\, d) \cap  (y_2,\, 1) $
	and 
	$ \IE_2 = (y_1^d - u_1^{d-1} u_2^{d-1},\, d) \cap (y_2^{d-1}-u_1^{d-2} u_2^{d-1},\, d-1 ) $.
	Suppose $ V ( u_1 ) $ is exceptional.
	We get 
	\[  
		d_1 (\IE_1,u,y) = \frac{d - 1}{d} 
		\quad \mbox{ and } \quad 
		d_1 (\IE_2,u,y) = \frac{ d - 2}{d - 1}.
	\]
	In \cite[Example~3.6]{BerndPoly},
	the equivalence $ \IE_1 \sim \IE_2 $ is deduced by applying the Diff Theorem (Proposition~\ref{Prop:basicandmore}\eqref{It:Diff}).
	Since $ d_1 (\IE_1,u,y) \neq d_1 (\IE_2,u,y ) $,
	it follows that $ \IE_1 $ and $ \IE_2 $ are not equivalent as {\IPsH}, because they have different exceptional data.
	
	Therefore the Diff Theorem is in general not true for the equivalence relation $ \simHx $.
	Let us explain this a bit more in details in a simplified variant of the example: 
	Suppose the Diff Theorem would be true for $ \simHx $.
	In particular, by applying the derivative $ \frac{\partial}{\partial u_1} $,
	we would get the equivalence
	\[
		\widetilde{\IE}_1:=
		(y_1^d - u_1^{d-1} u_2^{d-1},\, d) 
		\, \simHx \,  
		(y_1^d - u_1^{d-1} u_2^{d-1},\, d) \, \cap \, (u_1^{d-2} u_2^{d-1},\, d-1)
		=: \widetilde{\IE}_2
		.
	\]
	We have 
	$ d_1 (\widetilde{\IE}_1,u,y) = \frac{d - 1}{d} \neq \frac{d - 2}{d-1} = 
	d_1 (\widetilde{\IE}_2,u,y) $ which is a contradiction to 
	$ \widetilde{\IE}_1 \simHx \widetilde{\IE}_2 $ if $ u_1 $ is exceptional.
	  
	We provide a weaker version of the Diff Theorem which is valid for {\IEsH} in Lemma~\ref{Lem:BasicHist}\eqref{It:Diff_history}.
\end{Rk}

\begin{Prop} 
	\label{Prop:nuinvariant}
	Let $ (\IE,\cE_E) $ be a {\IPH} on $ R $ with $ E = \{ H_1, \ldots, H_\alpha \} $.
	Let
	$ x \in \Sing ( \IE ) $ and $ (u, y) = (u_1, \ldots, u_d; y_1, \ldots, y_s) $ a {\RSP} for $ R $ such that $ (y, u_{ e + 1 }, \ldots, u_d ) $, $  e \leq d $, defines the directrix $ \Dir_M ( \IE ) $. 
	
	Then $ \nu ( \IE, \cE_E, u, y ) $ is independent of $ ( y ) $ and invariant under $ \simHx $.
	Therefore it is justified to write
	$
		\nu ( \IE, \cE_E, u ) := \nu ( \IE, \cE_E, u, y)
	$
	and this is an invariant of the {\IEH}. 
\end{Prop}

\begin{proof}
	By Theorem~\ref{Thm:nuandsoon}\eqref{It:delta_ind_max} and \eqref{It:delta_inv}, $ \delta ( \IE, u, y ) $ is independent of $ (y) $ and invariant under $ \sim $.
	Hence it is also invariant under $ \simHx $.
	The exceptional data $ \cE (H_i) =  d_i $, $ i \in \{ 1, \ldots, \alpha \} $, is fixed under $ \simHx $. 
	By the definition of $  \nu ( \IE, \cE_E, u, y )$ this implies the assertion.
\end{proof}

The following properties hold for the refined equivalence $ \simHx $.

\begin{Lem}
\label{Lem:BasicHist}
	Let $ (\IE, \cE_E) = ((J,b), \cE ) $, $ x \in \Sing ( \IE ) $ and $ (u, y) $
	be as in Proposition~\ref{Prop:nuinvariant}.
	\begin{enumerate}[(1)]
		\item\label{It:1}		
		For every $ a \in \IZ_+$ we have $ (J^a, a b) \simHx (J,b) $.
		\item\label{It:2} 	
		Suppose there is another choice for $ ( y ) $, say $ ( z ) = (z_1, \ldots, z_s ) $,
						such that we have $ (z,1) \cap \IE \simHx (y,1) \cap \IE $.
						Then 
						$ 
							\ID (\IE , u, z) \simHx \ID (\IE , u, y)  
						$ 
						(both times with the induced exceptional data on $ V(z) $ and $ V(y) $).
		\item\label{It:3}	
		If $ char (\resfield) = 0 $ or if $ b < char (\resfield) $, then there exists a choice for the system $ (y)= ( y_1, \ldots, y_s ) $ such that 
						$ 
							\IE \simHx (y,1) \cap  \ID (\IE, u, y ) 
						$ 
						(with the induced exceptional data on $ V(y) $).
		\item\label{It:Diff_history}\label{It:4}	
		Let $ \cD_{ Q, u }^{ \log } = u^Q \cD_{ Q, u } \in \Diff^{\leq q}_{\resfield} \left( R \right) $, $ Q = (Q_1, \ldots, Q_d ) \in \IZ^d_{\geq 0} $ with $ |Q| = q $, be the logarithmic differential operators given by 
						$ 
							\cD_{ Q, u }^{\log} \left( u^A y^B \right) = \binom{A}{Q} u^{A} y^B.
						$
						Then we have 
						\[ 
							(J, b) \cap ( \cD_{ Q, u }^{\log} (J), b - q) \simHx (J, b) 
						\]
						(with the induced exceptional data on $ V(y) $).
						Moreover, if $ Q_i = 0 $ for all $ i \in \{ 1, \ldots, d \} $ with $ d_i \neq 0 $ in $ \cE ( x ) $, then the analogous statement is true for $  \cD_{ Q, u } $.
	\end{enumerate}
						Let $ \IE_1 = (J_1, b_1) $, $ \IE_2 = ( J_2, b_2 ) $ be two pairs on $ R $ such that both are equipped with the same exceptional data $ \cE_1 ( x ) = \cE_2 ( x ) = \cE ( x)  $ on $ V( y ) $ at $ x $.
						Suppose $ x \in \Sing ( \IE_1 ) \cap \Sing ( \IE_2 ) $.
	\begin{enumerate}[(1)]
		\setcounter{enumi}{4}
		\item
		\label{It:5}	Assume $ b_1, b_2 \in \IZ_+ $ and let $ m \in \IZ_+ $ be a positive integer such that $ b_1 \mid m $ and $ b_2 \mid m $.
						Then  $	(J_1,b_1) \cap (J_2,b_2) \simHx \left(J_1^{\frac{m}{b_1}} + J_2^{\frac{m}{b_2}}, m\right) $.
		\item\label{It:6}		$ \IE_1 \simHx \IE_2 $ implies
		\begin{enumerate}[(a)]
			\item\label{It:6a}	$ \IE_1 \cap \IE \simHx \IE_2 \cap \IE $.
			\item\label{It:6b}		$ \ord_P (\IE_1) = \ord_P (\IE_2) $, for all $ P \in \Spec(R) $.
							In particular, $ \ord_M (\IE_1) = \ord_M (\IE_2) $.
			\item\label{It:6c}	$ \Sing(\IE_1) = \Sing (\IE_2) $.
			\item\label{It:6d}		$ \ITC_M (\IE_1) \simHx  \ITC_M (\IE_2) $ and $ \IDir_M (\IE_1) \simHx \IDir_M (\IE_2) $.
			\item\label{It:6e}	$ \ID (\IE_1 , u, y) \simHx \ID (\IE_2 , u, y)  $ and $ (y,1) \cap \IE \simHx (y,1) \cap \ID (\IE , u, y) $ (both with the induced exceptional data on $ V(y) $).
		\end{enumerate}	
	\end{enumerate}
\end{Lem}

\begin{proof}
	The statements follow from a study of the behavior of the exceptional data
	and the corresponding statements before.
	More precisely, 
	\eqref{It:1} uses Proposition~\ref{Prop:basicandmore}\eqref{It:Power},
	\eqref{It:2} uses Proposition~\ref{Prop:moreCoef}\eqref{It:291b},
	\eqref{It:3} uses  Proposition~\ref{Prop:moreCoef}\eqref{It:sim_intersec},
	\eqref{It:4} uses  Proposition~\ref{Prop:basicandmore}\eqref{It:Diff},
	\eqref{It:5} uses Proposition~\ref{Prop:basicandmore}\eqref{It:intersec},
	\eqref{It:6a} uses Proposition~\ref{Prop:basicandmore}\eqref{It:sim_intersec},
	\eqref{It:6b} and
	\eqref{It:6c} use Proposition~\ref{Prop:basicandmore}\eqref{It:Numerical},
	\eqref{It:6d} uses Proposition~\ref{Prop:Dir}\eqref{It:Dir2},
	and
	\eqref{It:6e} uses Proposition~\ref{Prop:moreCoef}\eqref{It:291a} and \eqref{It:sim_DE}.
	For more details see \cite[Lemma~2.6.7]{BerndThesis}.
\end{proof}

\begin{Lem}
	Let $ \IE_1 = (J_1, b_1) $, $ \IE_2 = ( J_2, b_2 ) $ be two pairs on $ R $, $ x \in \Sing ( \IE_1 ) \cap \Sing ( \IE_2 ) $ and $ (u, y) $ a {\RSP} for $  R $.
	Let $ E = \{ H_1, \ldots, H_\alpha \} $ be a set of irreducible divisors on $ \Spec(R) $ such that the associated divisor has at most simple normal crossing singularities.
	Suppose both pairs have the same exceptional data 
	on $ V(y) $,
	$
		 \cE ( H_i ) = d_i 
	$
	for $ i \in \{ 1, \ldots, \alpha \} $. 
	Let $ \pi \colon Z' \to Z $ be a blow-up which is permissible for both pairs and let $ x' \in \Sing ( \IE'_1 ) \cap \Sing ( \IE'_2 ) $ with $ \pi ( x') = x $.
	We have 
	\begin{enumerate}[(1)]
		\item	$ (\IE_1 \cap \IE_ 2)' \simHxpi \IE'_1 \cap \IE'_2 $.
		\item	$ \IE_1 \simHx \IE_2 $ implies $ \IE_1' \simHxpi \IE_2' $.
	\end{enumerate}
\end{Lem} 

\begin{proof}
	If $ x $ is not contained in the center of the blow-up $ \pi $, then the situation at $ x' $ did not change.
	In this case the lemma is true.
	Thus let us assume that the center contains $ x $.
	Since the exceptional data are equal for $ \IE_1 $ and $ \IE_2 $ 
	the exceptional data at $ x' $ for $ \IE_1' $, $ \IE'_2 $, $ (\IE_1 \cap \IE_ 2)' $ and $ \IE'_1 \cap \IE'_2 $ are  given by the transform of $ \cE ( x ) $.
	Hence the first part follows by Proposition~\ref{Prop:basicandmore}\eqref{It:intersec_bu}.
	The definition of $ \sim $ implies the second part.	
\end{proof}

%
%
%
%
%
%
%
%
%
%
%
%
%
%

\section{The general case}
\label{Sec:BMfull}

After introducing pairs with history we are ready to turn to the general case involving exceptional divisors. 
First, we recall the construction of the invariant of Bierstone and Milman in the general case.
For this, we take a global perspective. 
Let $ X_0 $ be a scheme embedded in some regular scheme $ Z_0 $ of finite type over a field $ K $ of characteristic zero.
Suppose that we are in the $ j $-th step of a desingularization process for $ X_0 $. 
Hence, we have a sequence of the following form:
\begin{equation}
\label{eq:bigseq}
\begin{array}{rcccccccccccc}
		\emptyset=E_0	&	& E_1	&	&	\cdots &		E_{i} &	&	\cdots &	&	E_{j-1}	& &	E_j \\[8pt]
		Z_0					& \stackrel{\pi_1}{\longleftarrow} &	Z_1	& \stackrel{\pi_2}{\longleftarrow} &	\cdots  &	Z_{i} & \stackrel{\pi_{i + 1 }}{\longleftarrow}  & \cdots 	& \stackrel{\pi_{j-1}}{\longleftarrow} &	Z_{j-1}		& \stackrel{\pi_j}{\longleftarrow} 	&	Z_j	\\[5pt]
				\bigcup \, &	& \bigcup	&	& &		\bigcup &	& &	&	\bigcup	& &	\bigcup\\[5pt]
		X_0					& \longleftarrow &	X_1	& \longleftarrow &	\cdots	& 	X_{i}	& \longleftarrow &	\cdots	& \longleftarrow &	X_{j-1}		& \longleftarrow 	&	X_j	\\
				& &	&  & &   \vin &  &	&  &	 \vin & 	& \vin\\
				& &	&  & &   x_i & \leftmapsto & \cdots	& \leftmapsto &	 x_{j-1} & \leftmapsto &  x_j 
\end{array}
\end{equation}
\noindent
where each $ \pi_{i+1} \colon Z_{i+1} \rightarrow Z_{i }$ is a blow-up in a regular center which is contained in the singular locus of $ X_i $ and has at most simple normal crossings with $ E_i $, $ E_i $ denotes the set of exceptional divisors on $ Z_i $ corresponding to the former blow-ups and $ X_i $ is the transform of $ X_0 $ in $ Z_i $.
(The last row in \eqref{eq:bigseq} of points in $ X_i, \ldots, X_j $ is needed later).

Let $ x \in X := X_j \subset Z := Z_j $.
The invariant has the form 
$ 
	\inv_X (x) = (\nu_1, s_1;\, \nu_2, s_2; \, \ldots) .
$ 
Recall that $ \nu_1 = \nu_1 (x) = H_{X,x} $ is the Hilbert-Samuel function of $ X $ at $ x $.
Set 
\[
	\inv_r (x) := (\nu_1, s_1; \, \ldots; \, \nu_r, s_r) 
		\quad \mbox{ and } \quad 
	\inv_{ r + \frac{1}{2} } (x) := (\nu_1, s_1; \, \ldots; \, \nu_r, s_r;\, \nu_{r+1})
\]
for the truncation of the invariant after $ s_r $, resp. after $ \nu_{r+1} $.
We denote by $ E_j (x) $ the set of exceptional components passing through $ x $.

\begin{Constr}[$ \boldsymbol{ s_r ( x ) } $]
	\label{Constr:s_i}
	For $ i \leq j $ we denote by $ \pi_{ij} \colon Z_j \rightarrow Z_i $ the composition $ \pi_{ij} := \pi_{i + 1} \circ \pi_{ i +2 } \circ \cdots \circ \pi_{j-1} \circ \pi_j $ (with $ \pi_{jj} := \operatorname{id}_{Z_j} $) 
	and $ x_i:=\pi_{ij} (x) $ is the image of $ x = x_j \in Z_j$ in $ Z_i $, cf.~\eqref{eq:bigseq}.
	Let
	\[
		i_1 := \min \left\{ \; k \in \{0,\ldots, j\}  \,\mid\, \inv_{ \frac{1}{2} } (x) =\inv_{ \frac{1}{2} } (x_k) \; \right\}.
	\]
	We define $ E^1 (x) \subseteq E_j (x) $ to be the set of those exceptional components which are the strict transform of an exceptional component in $ E_{i_1} ( x_{i_1} ) $, 
	\[
		E^1 ( x) = \left\{ \; H \in E_j( x) \,\mid\, \exists \; H_0 \in E_{i_1} (x_{i_1})\,:\, H \mbox{ is the strict transform of } H_0 \;\right\}.
	\]
	Using these notions, define 
	$ 
		s_1( x ) := \# E^1 (x)
	$ and $ 
		\cE_1 (x) := E_j (x) \setminus E^1 (x).
	$ 
	
	Suppose we know $ \inv_{ r+\frac{1}{2} }(x) = (\nu_1, s_1; \, \ldots; \, \nu_r, s_r;\, \nu_{ r + 1 } ) $, for some $ r \geq 1 $. 
	Then we also constructed 
	$ \cE_r (x) = \cE_{r-1}(x) \setminus E^r(x)=E_j(x) \setminus \bigcup_{k=1}^r E^k(x) $.
	Let
	\[ 
		i_{ r + 1 } := \min \left\{ \; k \in \{0,\ldots, j\}  \,\mid\, \inv_{ r + \frac{1}{2} } (x) = \inv_{ r + \frac{1}{2} }(x_k) \; \right\} \geq i_r.
	\]
	Define $ E^{ r + 1 }(x) \subseteq \cE_r (x) $ to be the set of those exceptional components coming from the $ i_{ r + 1 } $-th resolution step and which we did not yet take into account in $ E^1(x), \ldots, E^r (x)$,
	\[
		E^{r+1}(x) = \left\{ \; H\in \cE_r(x)  \,\mid\, \exists \; H_0 \in E_{i_{r+1}} (x_{i_{r+1}})\,:\, H \mbox{ strict transform of } H_0 \;\right\}.
	\]
	Then we set 
	$ 
		s_{r+1} (x) := \#E^{ r + 1 }(x)
	$ and $ 
		\cE_{r+1} (x) := \cE_{r}(x) \setminus E^{r+1}(x). 
	$ 
\end{Constr}

For the definition of the terms $ \nu_{r+1} = \nu_{r+1} ( x ) $, $ r \geq 1 $, it is important to keep track of the ambient scheme and the  exceptional components.
In \cite{BMheavy} this is done by considering triples 
$
(N_{r},\cG_{r+1} , \cE_{r}) = (N_{r}(x),\cG_{r+1}(x) , \cE_{r}(x))
$,
where $ N_{r} $ is a regular ambient scheme contained in $ \Spec(R) $, 
$ \cG_{r+1} $ is a local description of the singularity $ x \in X $ on $ N_{r } $ and 
$ \cE_{r} $ is an ordered set of exceptional divisors on $ \Spec(R) $ which have simultaneously only normal crossing with $ N_{r} $.
In our language this corresponds to $ ( \cG_{r+1} , \cE_{r}  )  $ is a {\IPH} on $ N_{r} $ (Definition \ref{Def:IEH}) where we identify $ \cE_{r} $ with the exceptional data which it defines on $ N_{r} $.

\begin{Constr}[$ \boldsymbol{\nu_{r+1}(x)} $]
\label{Constr:nu}
	Let $ \widehat{\cO}_{X,x} \cong R/J $
	with $ R = K[[T_1, \ldots, T_n]] $ and $ J \subset R $ a non-zero ideal.  	
	Let $ ( f ) = ( f_1, \ldots, f_m ) $ be a (normalized) standard basis for $ J  $ as in Setup~\ref{SetupB}
	and set $ b_i := \ord_M(f_i) $ for $ i \in \{ 1, \ldots, m \} $.
	Consider $ \cG  =  ( f_1, b_1 ) \cap \ldots  \cap ( f_m, b_m ) $ on $ R $.
	 
	We start with the {\IPH} 
	$ 
		( \cG_1,\cE_0) := (\cG  , E_j ) 
	$ on $ N_0 := \Spec(R) $.
	We determine $ E^1  = E^1( x )$ and $ \cE_1 = \cE_1( x ) $ as described in Construction~\ref{Constr:s_i} and set
	\[
		\cF_1 :=  \cG_1 \cap \left(E^1 ,1 \right)
	\]
	where $ \left( E^1 , 1 \right) := \bigcap_{ H \in E^1  } \, (T_H, 1) $ and $ T_H $ denotes a local generator of $ H $.
	Thus we get the {\IPH} 
	$ 
		( \cF_1,\cE_1 ) 
	$
	on	$ N_0 $.
		
	Notice that also the exceptional data has changed.
	There are maybe less components
	and the assigned numbers may differ from those of the previous exceptional data.
	(For example, if $ E^1  \neq \emptyset $, then all the assigned numbers in $ \cE_1  $ are zero, because $ E_j $ defines a simple normal crossing divisor).
	
	As in Construction~\ref{Constr:NoExc}, we choose a maximal contact hypersurface $ N_1 := V ( y_1 ) $ for the pair $ ( \cF_1,\cE_1 ) $.
	Without loss of generality, we suppose that  $ (T_1, \ldots, T_{n-1}, y_1 ) $ is a regular system of parameters for $ R $.
	Let 
	\[ 
		\cH_1  := \ID ( \cF_1 ; T_1, \ldots, T_{n-1} ; y_1 )
	\] 
	be the coefficient pair of $ \cF_1 $ \wrt $ N_1 $.
	Hence, we obtained the {\IPH}
	$ 
		( \cH_1,\cE_1 ) 
	$ 
	on 
	$ N_1 $. 
	Again the exceptional data possibly has changed, because we have to consider here $ \cE_1 $ as data on $ N_1 = V ( y_1 ) $. 
	
	Recall, if we write $ \cF_1 = ( f_1, b_1 ) \cap \ldots \cap ( f_k, b_k ) $ ($ k \in \IZ_ + $, $ k \geq m $), then  		
		\[
			\cH_1  =
							\bigcap_{ i = 1 }^k
							\bigcap_{ q := q(i) = 0 }^{ b_i - 1}		
					 \,
			 	 	\left(\, \frac{ \partial^q f_i }{ \partial y_1^q } \Big|_{ N_1}, \, b_i - q \,\right) 
			 	 	.
		\]
	We put $ h_{i,q} := \dfrac{ \partial^q f_i }{ \partial y_1^q } \Big|_{ N_1 } $ for all $ i, q $ as above.
	Using this notation, one defines
	(always with $ i \in \{ 1, \ldots, k \} $ and $ q := q(i) \in \{ 0, \ldots, b_i - 1 \} $)
	\begin{equation}
		\left\{\hspace{15pt}
		\begin{array}{l}
			\mu_2 := \mu_2 ( x ) :=\min \left\{ \;\dfrac{ \ord_M ( h_{i,q} ) }{ b_i - q } \,\; \Big| \;\, i,\, q\, \right\} ,
				\\[15pt]
			\mu_{2,H} := \mu_{2,H} ( x ) := \min \left\{ \;\dfrac{ \ord_{H} ( h_{i,q} ) }{ b_i - q } \,\; \Big| \; \, i,\, q\, \right\} , \hspace{10pt} \mbox{ for } H \in \cE_1 =\cE_1(  x )  ,
				\\[15pt]
			\displaystyle 
			\nu_2 := \nu_2 ( x ) :=  \mu_2 - \sum\limits_{ H \in \cE_1 } \mu_{2,H},
		\end{array}
		\hspace{15pt}\right.
	\end{equation}
	where $ \ord_{H} ( h_{i,q} ) $ denotes the order of $ h_{i,q} $ along $ H $, i.e., if $ T_H $ is a local generator of $ H \in \cE_1 $, then 
	$ 
		\ord_{H} ( h_{i,q} ) = \max \left\{ \kappa \in \IZ_{\geq 0} \cup \{\infty\} \;\big|\; T_H^\kappa \mbox{ divides } h_{i,q} \right\}
		$.
	Furthermore, we still apply the abuse of notation introduced in Construction~\ref{Constr:NoExc} when we write $ \ord_M (.) $ for $ \ord_{M/\langle y_1 \rangle} (.) $.

	If $ \nu_2( x ) \in \{ 0, \infty\} $, then the process ends and the invariant is defined as 
	$$ 
		\inv_X (x) := \inv_{ 1\frac{1}{2} } ( x ) = (\nu_1, s_1;\, \nu_2).
	$$
	
	Suppose $ 0 < \nu_2( x ) < 1 $.
	We consider
	$$
		D_2 := D_2( x ) := \prod_{ H \in \cE_1 } T_H^{\mu_{2,H}  },
	$$
	where $ T_H $ denotes a local generator of $ H \in \cE_1 $.
	(We allow here fractional exponents).
	Then by definition of the terms $ \mu_{2,H} (x) $, each $ h_{i,q} $ 
	can be written as 
	\[
		h_{i,q} = D_2^{b_i - q } \cdot g_{i,q}  \, ,
	\]
	for some element $ g_{i,q} $. 
	We define the new pair
	\begin{equation}
	\label{Def:cG2(x)}
		\cG_2 := \cG_2 ( x ) :=  \Bigg( \, \bigcap_{  i = 1  }^k \;\, \bigcap_{ q = q(i) = 0 }^{ b_i - 1 } (\, g_{i,q},\, ( b_i - q ) \cdot \nu_2\,) \,\Bigg) \cap (D_2, 1 - \nu_2)
		\hspace{10pt}
		\mbox{ on }
		N_1.
	\end{equation}
	This is our variant of the so called companion ideal, thus we call it the {\em companion pair}.
	Clearly, the exceptional data has changed again.
	
	If $1 \leq \nu_2 (x) < \infty $, the assigned number $ 1 - \nu_2 $ of the $ D_2  $-component is $ \leq 0 $ and hence can be omitted, $ \displaystyle \cG_2 :=\cG_2( x ) :=  \bigcap_{  i = 1  }^k \;\, \bigcap_{ q = 0 }^{ b_i - 1 } ( \,g_{i,q},\, ( b_i - q ) \cdot \nu_2\,)  $.
	
	Together we get for $ 0 < \nu_2 ( x ) < \infty $ the {\IPH}
	$ (\cG_2, \cE_1 ) $ on $ N_1 $.
	This completes the first cycle of the general procedure.
	Then we start again with  $ ( \cG_2, \cE_1) $ on $ N_1 $ taking the role of the {\IPH} $ ( \cG_1, \cE_0) $ on $ N_0 $.

	\hfill {\em (End of Construction~\ref{Constr:nu})}
\end{Constr}

\begin{Obs}
	In Construction~\ref{Constr:nu},  $ \mu_{2,H}  $ coincides with the assigned number of $ H $ in the exceptional data $ \cE_1 = \cE_{1,V(y_1)} $ on $ V(y_1) $ of the {\IPH} $ (\cF_1 , \cE_1 ) $. 	
	Further, we have 
	$ 
	\Delta^N (\cH_1 ; T_1, \ldots, T_{ n-1 } )  
	= \Delta ( \cF_1 ;T_1, \ldots, T_{ n-1 } ; y_1 )
	$ 
	and $ \mu_2 = \delta ( \Delta ( \cF_1 ;T_1, \ldots, T_{ n-1 } ; y_1 ) ) $.
\end{Obs}

\begin{Lem}
\label{Lem:ObjectsEquiv}
	Let $ \cG_1  $ and $ \cG'_1  $ be two equivalent {\IPH}.
	Then
	$$
		\cF_1  \simHx \cF'_1 ,
		\hspace{15pt}
		\cH_1  \simHx \cH'_1  
		\hspace{15pt}
		\mbox{ and }
		\hspace{15pt}
		\cG_2 \simHx \cG'_2 ,
	$$
	where we have to consider the induced exceptional data.
	Thus these objects are invariants of the {\IEH} corresponding to $ \cG_1  $.
\end{Lem}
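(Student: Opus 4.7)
The plan is to verify each of the three $\simHx$-equivalences in turn, following the order of the construction; the main tool throughout is Lemma \ref{Lem:BasicHist}, which encapsulates the stability properties of $\simHx$ under intersection, coefficient pairs, and change of maximal contact.

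For $\cF_1(x) \simHx \cF'_1(x)$, I would first note that the set $E^1(x)$ depends only on data intrinsic to the idealistic exponent with history: the value of the Hilbert--Samuel function $\nu_1 = H_{X,x}$ (the same for $\cG_1(x)$ and $\cG'_1(x)$ since both describe the singularity of $X_j$ at $x$) together with the fixed exceptional set $E_j(x)$. Hence $(E^1(x), 1)$ is literally the same pair in both cases, and the induced exceptional data $\cE_1(x) = E_j(x) \setminus E^1(x)$ also agrees. Since $\cG_1(x) \simHx \cG'_1(x)$, Lemma \ref{Lem:BasicHist}(6)(a) yields $\cG_1(x) \cap (E^1(x), 1) \simHx \cG'_1(x) \cap (E^1(x), 1)$, which is exactly the claim.

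For $\cH_1(x) \simHx \cH'_1(x)$, recall that $\cH_1(x) = \ID_x(\cF_1(x), u', y_1)$ for a maximal contact coordinate $(y_1)$. By Lemma \ref{Lem:BasicHist}(3) (applicable because $\car{k} = 0$), such $y_1$ exists and realizes $\cF_1(x) \simHx (y_1, 1) \cap \ID_x(\cF_1(x), u', y_1)$; any two choices of maximal contact coordinates yield $\simHx$-equivalent coefficient pairs by Lemma \ref{Lem:BasicHist}(2). Applying Lemma \ref{Lem:BasicHist}(6)(e) to the equivalence $\cF_1(x) \simHx \cF'_1(x)$ established above then produces $\cH_1(x) \simHx \cH'_1(x)$ on $N_1(x) = V(y_1)$, with the induced exceptional data.

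For $\cG_2(x) \simHx \cG'_2(x)$, the key observation is that every ingredient in the construction of $\cG_2(x)$ is $\simHx$-invariant. By definition the numbers $\mu_{2,H}(x)$ are precisely the exceptional data weights in $\cH_1(x)$, and these agree with those of $\cH'_1(x)$ by Definition \ref{Def:IEH}. Further $\mu_2(x) = \delta_x(\cH_1(x); \cdot)$ is invariant under $\simHx$ by Proposition \ref{Prop:nuandsoon}(3), so $\nu_2(x) = \mu_2(x) - \sum_H \mu_{2,H}(x)$ is invariant as in Proposition \ref{Prop:nuinvariant}; in particular $D_2(x)$ is a monomial in fixed local generators of the $H \in \cE_1(x)$, hence identical for both pairs. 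The factorization $h_{i,l} = D_2^{b_i - l} g_{i,l}$ is therefore unique up to units, and repeated application of Lemma \ref{Lem:BasicHist}(1), (5), and (6)(a) assembles the final equivalence $\cG_2(x) \simHx \cG'_2(x)$. The main obstacle I anticipate is the intermediate maximal contact step: different representatives may admit different natural maximal contact coordinates, and one must be careful that the coefficient pair together with its induced exceptional data (now living on a possibly different hypersurface) remains well-defined modulo $\simHx$; this reconciliation, however, is exactly what Lemma \ref{Lem:BasicHist}(2)--(3) provides.
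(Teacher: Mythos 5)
Your treatment of the first two equivalences is fine and essentially the paper's: $\cF_1(x)\simHx\cF'_1(x)$ and $\cH_1(x)\simHx\cH'_1(x)$ are obtained from Lemma \ref{Lem:BasicHist} exactly as you describe (intersection with the fixed pair $(E^1(x),1)$, then existence/independence of maximal contact and part (6)(e) for the coefficient pairs).

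The gap is in the third equivalence. You correctly observe that the exceptional data, hence $\mu_{2,H}(x)$, $\nu_2(x)$ and the monomial $D_2(x)$, coincide for the two representatives, but your closing step --- ``repeated application of Lemma \ref{Lem:BasicHist}(1), (5), and (6)(a) assembles $\cG_2(x)\simHx\cG'_2(x)$'' --- does not go through. None of the cited items covers the operation actually performed in Construction \ref{Constr:nu}: dividing the ideal by the monomial $D_2^{\,b}$ and simultaneously replacing the assigned number $b$ by $\nu_2 b$. Since $\cH_1(x)\simHx\cH'_1(x)$ is an equivalence of pairs and not an equality of ideals, the factorizations $J=D_2^b\, I$ and $J'=D_2^b\, I'$ produce a priori unrelated ideals $I$ and $I'$; ``uniqueness of the factorization up to units'' only makes each $g_{i,l}$ well defined for a fixed representative and says nothing about comparing the two representatives. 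The equivalence $(I,\nu_2 b)\sim(I',\nu_2 b)$ is genuinely delicate: when $0<\nu_2<1$ a center permissible for $(g, b\nu_2)$ need not be permissible for $(h,b)$ (this is precisely why $(D_2,1-\nu_2)$ has to be added to the companion pair), so it is not a formal consequence of $\cH_1(x)\simHx\cH'_1(x)$. The paper supplies the missing argument: after reducing, via the power and sum rules (which is where your items (1) and (5) really enter), to the case $\cH_1(x)=(J,b)$, $\cH'_1(x)=(J',b)$ with a common $b$, it proves $(I,\nu_2 b)\sim(I',\nu_2 b)$ directly, first by the order computation $\ord_{x_0}(I)=\ord_{x_0}(J)-\ord_{x_0}(D_2^b)=\ord_{x_0}(J')-\ord_{x_0}(D_2^b)=\ord_{x_0}(I')$ for every point $x_0$, giving equal singular loci, and then by checking that under any permissible blow-up the factorization persists, $\widetilde{J}=\widetilde{D_2^b}\,\widetilde{I}$ (and likewise for $J'$, $I'$), so the comparison can be iterated along an arbitrary {\LSB}. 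Your proposal needs this blow-up-tracking argument (or an equivalent one) to close the third equivalence.
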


\begin{proof}
	The first and the second equivalence follow by Lemma \ref{Lem:BasicHist} \eqref{It:6a} and \eqref{It:6e}, respectively.
	The last equivalence $ \cG_2  \simHx \cG'_2 $ is clear for the case
		$ \cH_1 = (J, b ) $ and $ \cH'_1 = ( J^a, a b ) $ for some $ a \in \IZ_+ $.
	Thus we may assume $ \cH_1  = ( J, b ) $ and $ \cH'_1  = ( J', b ) $ 
	with the same assigned number $ b \in \IZ_+ $.
	For an element $ h \in J $ we have defined $ g = g ( h ) $ via $ h = D_2^b \cdot g $.
	Set $ I := \langle \, g ( h ) \mid h \in J \, \rangle $, then $ J = D_2^b \cdot I $.
	(Here we identify $ D_2^b $ with the ideal which it generates in $ R $).
	We have $ \cG_2  = ( I, \nu_2 b ) \cap ( D_2, 1 - \nu_2 ) $.
	We can do the same for $ \cH_1'  $ and obtain the ideal $ I' $ with the analogous property.
	
	If we can show $  ( I, \nu_2 b ) \simHx  ( I', \nu_2 b ) $ (as {\IPsH} on $ R $), then the assertion follows.
	Since we have factored $ D_2 $, the assigned numbers in the induced exceptional data on $ V(y_1) $ are all zero.
	Thus we only have to prove 
	$  
		( I, \nu_2 b ) \sim  ( I', \nu_2 b ) .
	$
	
	An extension of the {\RSP} by further independent elements does not change the situation.
	Hence we may assume that the extension is trivial.
	Further, we have, for any prime ideal $ P \subset  R  $,
	\[ 
		\ord_{ P } ( I ) = \ord_{ P } ( J ) - \ord_{P} ( D_2^b ) 
	= \ord_{P} ( J' ) - \ord_{P} ( D_2^b ) = \ord_{ P } ( I' ).
	\]
	For the first (resp. third) equality we use $ J = D_2^b \cdot I $ (resp. $ J' = D_2^b \cdot I' $) and the second follows by $ ( J, b ) \sim ( J', b ) $.
	Therefore $ \Sing ( I, b ) = \Sing ( I', b ) $.
	After a permissible blow-up $ \pi \colon \widetilde{ Z } \to \Spec ( R ) $ the transform $ ( \widetilde{I}, \nu_2 b ) $ of $ ( I, \nu_2 b ) $ is determined by $ I \cO_{\widetilde{ Z }} = H^{\nu_2 b } \widetilde{I} $, where $ H $ denotes the ideal sheaf of the exceptional divisor.
	For the transform of $ J $ we have $ J \cO_{\widetilde{ Z }} = H^b \widetilde{J} =   H^{ (1 - \nu_2) b } \widetilde{D_2^b} \cdot H^{\nu_2 b } \widetilde{I} $.
	($ \widetilde{D_2^b}  $ denotes the transform of $ D_2^b $).
	Thus the situation is the same as before the blow-up, $ \widetilde{J} =   \widetilde{D_2^b} \widetilde{I} $ and this is also true for $ J' $ and $ I' $.
	Together we get the desired equivalence $  ( I, \nu_2 b ) \sim  ( I', \nu_2 b ) $.
\end{proof}

\begin{Thm}[{Theorem~\ref{MainThm:nuPurelyPoly}}]
\label{Prop:ThmA2ndB}
	Let $ r \in \IZ_+ $. 
	Using the notation of Construction~\ref{Constr:nu},
	consider $ ( \cF_r, \cE_r  ) $ on $ N_{r-1} = V ( y_1, \ldots, y_{r-1} ) $.
	Let $ ( u , y_r) = ( u_1, \ldots, u_e; y_r ) $ be the remaining part of the {\RSP} for the regular local ring $ R $,
	where $ y_r $ is the parameter defining the next hypersurface (in $ N_{r-1} $) with maximal contact in Construction~\ref{Constr:nu}.
	Then:
	\begin{enumerate}[(1)]
		\item 
		$ \mu_{ r + 1 } = \delta (  \Delta( \cF_r , u, y_r) ) $.
		
		\item 
		Write $ \cE_{r,N_r}  := \{ (H_1, d_1),  \ldots, (H_j, d_j) \} $ for the exceptional data on $ N_r = V(y_1, \ldots, y_r) $ of $ ( \cF_r, \cE_{r,N_r} ) $.
		The entry $  \nu_{ r + 1 } = \nu_{ r + 1 } ( x ) $ of $ \inv_X(x) $ coincides with $ \nu ( \cF_r , \cE_{r,N_r} , u ) = \delta (  \Delta( \cF_r , u, y_r) ) - \sum_{ i = 1 }^j d_i .
		$
	\end{enumerate}
\end{Thm}

\begin{proof}
	This follows from the definition of $ \mu_{ r + 1 }, \mu_{ r + 1, H  } $ and $ \nu_{ r + 1 } $ 
	(see Construction~\ref{Constr:nu}).
\end{proof}

In conclusion, the invariant $ \nu_{ r + 1 } = \nu_{ r + 1 } ( x ) $ can be completely determined by only considering polyhedra. 
By Proposition~\ref{Prop:nuinvariant},  $ \nu ( \cF_r , \cE_{r,N_r} , u ) $,
and thus $ \nu_{ r + 1 }  $, is independent of the choice of a representative as {\IEH} as well as of the choice of $ ( y ) $ (for fixed $ ( u) $).
Moreover, equivalent {\IPsH} determine the same invariant $ \inv_X ( x ) $ by Lemma \ref{Lem:ObjectsEquiv},
i.e.,
$ \inv_X ( x) $ is an invariant of the \IEH.

\begin{Rk}
	If $ \cE_s( x ) = \emptyset $ for some $ s $, then the remaining process coincides with the one described in Section~\ref{Sec:NoExc} without exceptional divisors.
\end{Rk}

\begin{Obs}[\em Behavior of the polyhedron]
	\label{Obs:BehaviorPoly}
Let us see what happens to our polyhedra in each step of the general process.
Let $ r \in \IZ_+ $ with $ 0 < r \leq n $ and let $ e = n - r $.

\noindent 
\textbf{\emph{From $\boldsymbol{\cG_r }$ to $\boldsymbol{\cF_r  = \cG_r  \cap ( E^r ,1)} $:}}
In this step we add 
$
	 \left( E^r , 1 \right) = \bigcap_{ H \in E^r } \, (g_H, 1),
$
where $ g_H $ denotes a local generator of $ H $.
Recall that $ s_r = \#E^r  $.
By construction $ E^r  \subseteq \cE_{r-1} $ has only normal crossings with $ N_{r-1} $.
Thus we can choose the {\RSP} $ ( u ) = ( u_1,\ldots, u_{e+1} ) $ for $ N_{r-1} $ such that for all $ H \in E^r $ the local generator is $ g_H = u_i $ for $ i \in I_r:= \{i_1, \ldots, i_{s_r} \} \subseteq \{1, \ldots, e+1 \} $.
(In fact, we can choose the {\RSP} such that the analogous condition holds for every $ H \in \cE_{r-1} $).

Adding the exceptional components $ ( E^r,1 ) $ corresponds to adding the set of points 
$
\{ 
( \delta_{ \alpha i })_{ \alpha \in \{1,\ldots, e + 1 \} }  \mid i \in I_r 
\} $
to the points determining the polyhedron $ \Delta^N ( \cG_r , u ) $,
where $\delta_{\alpha i}$ denotes the usual Kronecker delta.

\noindent 
\textbf{\emph{From $\boldsymbol{\cF_r}$ to $\boldsymbol{\cH_r}$:}}
Suppose $ \cF_r = ( f_1, b_1) \cap \ldots \cap ( f_k, b_k) $, then there exists at least one $ i \in \{ 1, \ldots, k \} $ such that $ b_i = \ord_M ( f_i ) $.
We assume \WLOG that $ y_r := u_{ e + 1 } $ has maximal contact with $ \cF_r $.
Hence in this step we project the polyhedron $ \Delta^N ( \cF_r;  u_1, \ldots, u_e, u_{ e +1 } ) \subset \IR_{\geq 0}^{ e + 1} $ from the point $ ( 0, \ldots,0,1) \in \IR^{e +1 }_{\geq 0} $ to $ \IR_{\geq 0}^e $.
The resulting polyhedron is
$
 \Delta ( \cF_r;  u_1, \ldots, u_e; y_r ) = \Delta^N ( \cH_r;  u_1, \ldots, u_e ) \subset \IR_{\geq 0}^{ e }.
$

\noindent 
\textbf{\emph{From $\boldsymbol{\cH_r }$ to $\boldsymbol{\cG_{r+1}}$:}}
Suppose $ \cH_r = ( h_1, b_1) \cap \ldots \cap ( h_p, b_p) $.
The last step consists of three smaller steps.
We determine $ D_{r+1} $ and write each $ (h_i,b_i) $ as $ h_i = D_{r+1}^{b_i} \cdot g_i $.
Set 
\[
	\widetilde{\cH_r}  := \bigcap_{ i = 1 }^p \;  (g_i, b_i) 
\quad 
\mbox{ and }
\quad 
	\widetilde{\cG_{r+1}}  :=  \bigcap_{ i = 1 }^p  \;(g_i, b_i  \nu_{r+1}) .
\]
Note that $ \cG_{r+1} = 	\widetilde{\cG_{r+1}} \cap ( D_{r+1}, 1 - \nu_{r+1}) $.
The smaller steps are: 
\begin{enumerate}[(1)]
	\item \label{It:cases}
	\textbf{\emph{From $\boldsymbol{\cH_r}$ to $\boldsymbol{\widetilde{\cH_r}}$:}} 
	 		Since $ N_r $ and $ \cE_r $ have simultaneously only normal crossings, we can choose the coordinates $ (u_1, \ldots, u_e) $ of $ N_r $ such that for all $ H \in \cE_r $ the local generator is $ g_H = u_\ell $ for some $ \ell \in \{1, \ldots, e \} $.
	 		In this situation we set $ \mu_{r+1,\ell} := \mu_{r+1,H} $.
 			Put 
 			$$ 
 				I_r :=\left\{\, \ell_1,\, \ldots,\, \ell_{m_r} \,\right\} := \left\{ \, \ell \in \{1, \ldots, e \} \;\mid \; \mu_{r+1,\ell} \neq 0 \, \right\} \subseteq \{1, \ldots, e \} .
 			$$
 			We denote by $ \tau_r : \IR^e \to \IR^e $ the translation in the negative direction by the vector 
 			$$
 				\dBu{w}{r} := 
 						\bigg( 	0,\,\ldots,\,
							 	0,\,\underset{\displaystyle \ell_1}{\underset{\displaystyle \uparrow}{\mu_{r+1,1}}},\,0,\,\ldots,\,
								0,\,\underset{\displaystyle \ell_2}{\underset{\displaystyle \uparrow}{\mu_{r+1,2}}},\,0,\,\ldots,\,
								0,\,\underset{\displaystyle \ell_{m_r}}{\underset{\displaystyle \uparrow}{\mu_{r+1,m_r}}},\,0,\,\ldots,\,
								0 \bigg),
 			$$
 			where the marking $ \ell_\iota $ indicates that the $ \ell_\iota$-th entry is specified.
 			In other words, a point $ v \in \IR^e $ is sent to $ \tau_r ( v ) = v - \dBu{w}{r}  $.
			Then we have for the Newton polyhedra
			$ 
				\tau_r \left(\, \Delta^N ( \cH_r , u ) \, \right) = \Delta^N ( \widetilde{\cH_r} , u ) \subseteq \IR^e_{\geq 0}. 
			$
	\item
	\textbf{\emph{From $\boldsymbol{\widetilde{\cH_r}}$ to $\boldsymbol{\widetilde{\cG_{r+1}}}$:}} 
			In this step we multiply each point of the polyhedron $ \Delta^N ( \widetilde{\cH_r}  , u ) $ by the factor $ \nu_{r+1}^{-1} $ and get $ \Delta^N ( \widetilde{\cG_{r + 1 }} , u ) $.
 	\item
 	\textbf{\emph{From $\boldsymbol{\widetilde{\cG_{r+1}}}$ to $\boldsymbol{\cG_{r+1}}$:}}
 			The last step is similar to ``\emph{From $\cG_r $ to $\cF_r $}''.
 			By definition $ \cG_{r+1} = \widetilde{\cG_{r+1}} \cap (D_{r+1}, 1 - \nu_{r+1}) $.
 			Thus we add the points associated to 
 			\[
 				\left(D_{r+1} = \prod_{ H \in \cE_{r} } g_H^{ \mu_{r+1,H} },\, 1 - \nu_{r+1} \right),
 			\]
 			to the points determining $ \Delta^N ( \widetilde{\cG_{r + 1 }}  , u ) $, 
 			where $ g_H $ is a local generator of $ H \in \cE_{r} $. 
 			As in \eqref{It:cases}, we choose $ ( u ) = (u_1,\ldots, u_e) $ such that for all $ H \in \cE_r $ the local generator is $ g_H = u_\ell $ for some $ \ell \in \{1, \ldots, e \} $.
 			Again we set $ \mu_{r+1,\ell} := \mu_{r+1,H} $
 			and 
 			$
 				I_r :=\left\{ \ell_1, \ldots, \ell_{m_r} \right\} := \left\{  \ell \in \{1, \ldots, e \} \mid  \mu_{r+1,\ell} \neq 0  \right\} \subseteq \{1, \ldots, e \} .
 			$
 			Then $ (D_{r+1}, 1 - \nu_{r+1})  $ yields in $ \Delta^N ( \cG_{r + 1 } , u ) $ the point
 			$$
 			\bigg( 	0,\,\ldots,\,
							 	0,\underset{\displaystyle \ell_1}{\underset{\displaystyle \uparrow}{\frac{\mu_{r+1,1}}{1 - \nu_{r+1}} }},0,\,\ldots,\,
								0,\underset{\displaystyle \ell_2}{\underset{\displaystyle \uparrow}{\frac{\mu_{r+1,2}}{1 - \nu_{r+1}} }},0,\,\ldots,\,
								0,\underset{\displaystyle \ell_{m_r}}{\underset{\displaystyle \uparrow}{\frac{\mu_{r+1,m_r}}{1 - \nu_{r+1}} }},0,\,\ldots,\,
								0 \bigg).
 			$$
\end{enumerate}
\rightline{\em (End of Observeration~\ref{Obs:BehaviorPoly})}
\end{Obs}

%
%
%
%
%
%
%
%
%
%
%
%
%
%

The construction of the invariant of Bierstone and Milman takes several small steps.
Therefore it is hard to formulate a step-by-step result on the behavior of the generators of the ideal $ J $ as we did in Proposition \ref{Prop:MainThmNoExc} and Observation \ref{Obs:quasihomogeneous}.
We end the present article by explaining how the procedure becomes easier in certain good situations.

\begin{Not}
	Fix $ r \in \IZ_+ $.
	Using the notation of Construction~\ref{Constr:nu},
	let $ \cI_r \in \{ \cG_r , \cF_r , \cH_{r + 1}  \} $ and $ s \in \IZ_+ $, $ s < r $.
	\begin{enumerate}[(1)]
		\item We define the {\em $ \cG_s $-part of $ \cI_r $} to be the part of $ \cI_r $ which is by the construction coming from $ \cG_s $.
		\item	By the {\em $ \dBu{E}{s} $-part} (resp. {\em $ \dBu{D}{s} $-part) of $ \cI_r $} we denote the part which occurred by adding $ E^s, \ldots, E^r $ (resp. $ D_{s + 1} , \ldots, D_r $).
	\end{enumerate}	 
	For $ \cI_r = \cG_r $ we neglect $ E^r  $ in the definition of the $ \dBu{E}{s} $-part, because it has not been added yet.
	If $ s = 1 $, then we speak also of the $ \cG $-part (resp. $ E $-part, resp. $ D $-part) of $ \cI_r $.
\end{Not}

\begin{Obs}[\emph{Big steps with the exceptional part} $ { (E^q  ,1) } $]
\label{Obs:BigEstep}
	In the definition of $ \cF_1  $ we add the old exceptional components $ (E^1 , 1 ) $ to $ \cG_1  $.
	This enables us to make sometimes more than one step in Construction \ref{Constr:nu}:
	Since $ E^1 $ is a simple normal crossing divisor on $ N_0  = \Spec(R) $,
	we can choose a {\RSP} $ ( T ) = (T_1, \ldots, T_n ) $ for $ R $ such that every $ H \in E^1 $ is locally given by some $ T_\ell = 0 $ for $ \ell \in \{1,\ldots,n\} $,
	say $ E^1 $ is given by $ (  T_{\ell_1},\, \ldots,\, T_{\ell_{s_1}} ) $.
	Suppose $ s_1  \geq 1 $. 
	Set $ ( z ) = ( z_1, \,\ldots,\, z_{s_1} ) = ( T_{\ell_1},\, \ldots,\, T_{\ell_{s_1}} ) $.
	Then $ V ( z ) $ has maximal contact  with $ \cF_1 $.
	So this is a possible choice for the first $ s_1 $ steps in definition of $ \nu_i = \nu_i ( x ) $.
	Since $ \cE_1 $ and $ E^1 $ have simultaneously only normal crossings, we can require the additional property on $ ( T ) $ that $ \cE_1 $ is given by $ ( T_{ m_1 }, \ldots, T_{ m_p} ) $, 
	where $ T_\iota  \neq T_\rho  $ for $ \iota \in \{ m_1, \ldots, m_p \} $ and $ \rho \in \{ \ell_1, \ldots, \ell_{s_1 } \} $. 
	Thus we get for every $ i \in \{ 2,\ldots, s_1 \} $ 
	(If $ s_1 = 1 $ this set is empty):
	\begin{enumerate}[(1)]
		\item 	$ \mu_{i,H} = 0 $ for every $ H \in \cE_i $, thus $ D_i = 1 $ and
		\item	$ \nu_i = \mu_i = 1 $.
	\end{enumerate}

	Set $ d := s_1 $.
	In the procedure we added  $ E^2, \ldots, E^{d} \subset \cE_1  $.
	Recall that $ s_q = \# E^q  $ for $ q \in \{ 1, \ldots, d \} $ and $ \cE_d = E_j \setminus \bigcup_{ q = 1 }^{ d } E^q $,
	where $ E_j $ is the set of exceptional divisors at this step of the resolution process, see \eqref{eq:bigseq}.
	If 
	$ 
		s_1 + \cdots + s_d - d \geq 1 
	$
	, then $ D_{ d + 1 } = 1 $, $ \nu_{ d + 1 } = \mu_{ d + 1 } = 1 $ and we can choose the next maximal contact in the $ E $-part of $ \cH_d  $.

	\begin{center} \emph{{\bf \em Convention.} We choose the maximal contact variables in the $ E $-part until we reach the step $ r > d $ where $ s_1 + \ldots + s_r - r = 0 $.}
		\end{center} 

	This means the $ E $-part of $ \cH_r $ is empty.
	Recall that $ \cH_r $ determines $ \nu_{ r + 1 } $.
	As above it follows for every $ i \in \{ 2,\ldots, r \}$:
	\begin{enumerate}[(1)]
		\item[{(1')}] 	$ \mu_{i,H} = 0$ for every $ H \in \cE_i$, thus $ D_i = 1 $ and
		\item[{(2')}]	$ \nu_i = \mu_i = 1 $.
	\end{enumerate}
	In particular, $ \cH_r $ is only given by the $ \cG_1 $-part.
	{\em This means, $ \cH_r $ is the coefficient pair of $ \cG_1  $ \wrt $ V(z_1, \ldots, z_r ) $}.

	In general, we cannot assume $ s_1 > 0 $. 
	Set
	\[
		d := \min \left\{\, q \in \IZ_+ \;|\; s_q \neq 0 \,\right\}.
	\]
	Then $ E^d  \neq \emptyset $ and $ \cF_d = \cG_d \cap ( E^d  , 1 ) $.
	We choose the maximal contact $ V (z_{d}) $ such that there is some $ H \in E^d $ which is locally given by $ V (z_{d}) $.

	If $ s_d \geq 2 $, then the $ \dBu{E}{d} $-part of $ \cH_{d} $ is non-empty. 
	This implies $ \nu_{ d + 1 }  = \mu_{ d + 1}  = 1 $.
	In the next step of the procedure we multiply the assigned numbers by $ \nu_{ d + 1 } = 1 $, thus $ \cG_{ d + 1 }  = \cH_d $ and then we add $ E^{ d + 1 } $ in order to obtain $ \cF_{ d + 1 } $.
	We choose the maximal contact in the $ \dBu{E}{d} $-part and so on. 
	This continues until we are at the step
	\[
		r:= \min \left\{\, \ell \in \IZ_+ \mid \ell \geq d \ \mbox{ and } \ s_d + \cdots + s_\ell - ( \ell-d+1) = 0 \,\right\}.
	\]
\end{Obs}
	
Putting everything together yields:
	
\begin{Prop}
\label{Prop:EpartResult}
	Let $ d, r \in \IZ_+ $ be as above. 
	For every $ i \in \{ d + 1, \ldots , r \} $ we get
	\begin{enumerate}[(1)]
		\item[(i)] 		$ \mu_{i,H} = 0 $ for every $ H \in \cE_i$, thus $ D_i  = 1 $ and
		\item[(ii)]		$ \nu_i = \mu_i= 1 $,
		\item[(iii)] 	the $ \dBu{E}{d} $-part of $ \cH_r  $ (and $ \cG_{r + 1} $) is empty,
		\item[(iv)] 	hence $ \cH_r  $ is the coefficient pair of $ \cG_d  $ \wrt $ V(z_{ d }, \ldots, z_r ) $,
		\[
			\cH_r  =
			\ID ( \cG_d; u ; z_{ d }, \ldots, z_r  )		,
		\]
		 and $ \mu_{ r + 1 } = \delta (  \Delta ( \cG_d; u ; z_{ d }, \ldots, z_r  )) $, where $ ( u ) $ denotes a system extending $ (z) $ to a {\RSP} for $ R $.
	\end{enumerate}
	Further, $ \nu_{ r + 1 } $  is given by $ \mu_{ r + 1 } $ and the assigned numbers in the exceptional data of $ \cH_r  $.
\end{Prop}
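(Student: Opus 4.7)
The strategy is to prove the four items simultaneously by induction on $i$ from $d+1$ up to $r$, formalizing the informal count in Observation \ref{Obs:BigEstep}. The key bookkeeping device is to maintain, at each stage $i$, the invariant that there are exactly $s_d + s_{d+1} + \cdots + s_i - (i - d + 1) + 1 \geq 1$ old exceptional components from $E^d(x) \cup \cdots \cup E^i(x)$ still available inside $\cH_{i-1}(x)$ (in the form of a factor $(z_j,1)$, $z_j$ a local generator of some $H \in E^{j}(x)$, $d \leq j \leq i$), which have not yet been consumed as maximal contact variables. This counting invariant, together with the hypothesis that $s_d + \cdots + s_{i-1} - (i-d) > 0$ for all $i \leq r$, is what forces the $E$-part of $\cH_{i-1}(x)$ to remain non-empty in each intermediate step, and to become empty exactly at stage $r$.

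First I would establish the base case $i = d+1$. By definition of $d$ we have $E^d(x) \neq \emptyset$, so $\cF_d(x) = \cG_d(x) \cap (E^d(x),1)$ contains the factor $(z_d,1)$ for some local generator $z_d$ of an $H \in E^d(x)$. This $V(z_d)$ has maximal contact with $\cF_d(x)$ at $x$ (as $(z_d,1) \subset \cF_d(x)$), so passing to the coefficient pair gives $\cH_d(x) = \ID_x(\cF_d(x), u, z_d)$ and $\mu_{d+1}(x) = \delta_x(\cF_d(x); u; z_d) = 1$. Next, the new exceptional components in $\cE_d(x)$ are locally given by coordinates $t_\iota$ disjoint from $z_d$ (since $\cE_d(x)$ and $E^d(x)$ have simultaneous normal crossings), so none of them divides any $g_{i,l}$ in the coefficient pair; hence $\mu_{d+1,H}(x)=0$ for $H \in \cE_d(x)$, $D_{d+1}(x)=1$, and $\nu_{d+1}(x) = \mu_{d+1}(x) = 1$, proving (i) and (ii) for $i=d+1$.

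For the inductive step, assume (i) and (ii) hold through index $i-1$ with $d < i \leq r$. Since $D_j(x)=1$ and $\nu_j(x)=1$ for $d < j \leq i-1$, the transition $\cH_{j-1}(x) \rightsquigarrow \cG_j(x)$ is trivial (no translation or rescaling), so the remaining $E^{(d)}$-components from earlier stages survive untouched inside $\cG_{i-1}(x)$ and $\cF_{i-1}(x) = \cG_{i-1}(x) \cap (E^{i-1}(x),1)$. The counting invariant then yields at least $s_d + \cdots + s_{i-1} - (i-d) \geq 1$ such available old components inside $\cF_{i-1}(x)$ at stage $i-1$; choose $z_{i-1}$ to be the generator of one of them, so again $V(z_{i-1})$ has maximal contact with $\cF_{i-1}(x)$ and the same argument as in the base case gives $\mu_{i,H}(x)=0$ for $H \in \cE_{i-1}(x)$, $D_i(x)=1$, and $\nu_i(x) = \mu_i(x) = 1$. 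This completes (i) and (ii).

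Finally, for (iii) and (iv), by the defining minimality of $r$ we have $s_d + \cdots + s_r - (r-d+1) = 0$, which by the counting invariant means that every old exceptional component introduced through $E^d(x), \ldots, E^r(x)$ has been consumed as one of the maximal contact coordinates $z_d, \ldots, z_r$. Consequently the $E^{(d)}$-part of $\cH_r(x)$ is empty, proving (iii); and $\cG_{r+1}(x)$ inherits an empty $E^{(d)}$-part since $D_{r+1}(x)$ does not reintroduce any of these factors. Statement (iv) then follows: because (i)--(ii) made all intermediate steps trivial, iterating Definition \ref{Def:IdCoeffExp} shows that $\cH_r(x)$ equals the successive coefficient pair of $\cG_d(x)$ with respect to the maximal contact system $(z_d, \ldots, z_r)$, and the identification $\mu_{r+1}(x) = \delta(\Delta_x(\cG_d(x); u; z_d, \ldots, z_r))$ comes from Proposition \ref{Prop:ThmA2ndB}. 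The last sentence about $\nu_{r+1}(x)$ is then the definition $\nu_{r+1} = \mu_{r+1} - \sum_{H \in \cE_r(x)} \mu_{r+1,H}$. The only subtle point — and what I expect to require a little care to write down cleanly — is verifying that the simultaneous normal crossing condition on $E^d(x) \cup \cdots \cup E^r(x) \cup \cE_r(x)$ really lets us choose the $(z_d,\ldots,z_r)$ as part of one common \RSP{} so that the iterated coefficient pair construction is unambiguous.
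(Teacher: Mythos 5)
Your overall route is the one the paper itself intends: Proposition \ref{Prop:EpartResult} is introduced with ``putting everything together yields'' and its proof is exactly the bookkeeping of Observation \ref{Obs:BigEstep}, which your induction formalizes. But the two implications you actually invoke to prove (i) and (ii) are not valid, and that is a genuine gap. First, ``$V(z_{i-1})$ has maximal contact, hence $\mu_i(x)=\delta_x(\ldots)=1$'' is false in general: the order of the coefficient pair with respect to a maximal contact hypersurface is only $\geq 1$, and it is precisely the next nontrivial invariant (this is the whole point of Construction \ref{Constr:NoExc}, where $\nu_2>1$). Concretely, for $\cG_d(x)=(z_d^2+u^3,\,2)$ and $E^d(x)=\{V(z_d)\}$, so $\cF_d(x)=(z_d^2+u^3,\,2)\cap(z_d,1)$, the hypersurface $V(z_d)$ has maximal contact but $\mu_{d+1}(x)=3/2$. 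Second, ``the new exceptional coordinates divide none of the elements of the coefficient pair'' is also false: a new exceptional coordinate $t_\iota$ may well divide coefficients coming from the $\cG_d$-part (this is exactly why the construction has to introduce $D_i(x)$ at all). What actually forces $\mu_i(x)=1$ and $\mu_{i,H}(x)=0$ is that $\cH_{i-1}(x)$ still contains a factor $(z',1)$ with $z'$ an unconsumed old exceptional coordinate: the term $\ord_x(z')/1=1$ bounds the minimum defining $\mu_i(x)$ from above (all terms being $\geq 1$), and $\ord_{H,x}(z')=0$ makes the minimum defining $\mu_{i,H}(x)$ vanish. This is exactly the inequality $s_d+\cdots+s_{i-1}-(i-d)\geq 1$, which holds for all $i\leq r$ by the minimality of $r$ (the count drops by at most one per step). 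You write this inequality down, but you use it only to guarantee that a next maximal contact variable can be chosen in the $\dBu{E}{d}$-part, not at the point where the numerical claims are derived; in particular your base case never uses $s_d\geq 2$ (equivalently $r>d$), without which both claims fail, as the example above shows.

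There is also a bookkeeping slip in the counting invariant itself: as stated it counts components of $E^d(x)\cup\cdots\cup E^i(x)$ inside $\cH_{i-1}(x)$, although $E^i(x)$ has not been added at that stage (it only enters with $\cF_i(x)$), and the counts you attach to $\cF_{i-1}(x)$ and $\cH_{i-1}(x)$ are interchanged; the correct values are $s_d+\cdots+s_{i-1}-(i-1-d)$ for $\cF_{i-1}(x)$ and $s_d+\cdots+s_{i-1}-(i-d)$ for $\cH_{i-1}(x)$. Once the count is placed correctly and is made the reason for (i) and (ii) --- surviving factor $(z',1)$ in $\cH_{i-1}(x)$ for $i\leq r$, no surviving factor at stage $r$ --- your treatment of (iii), of (iv) via iterated coefficient pairs and Proposition \ref{Prop:ThmA2ndB}, and of the final sentence via the identity $\nu_{r+1}=\mu_{r+1}-\sum_{H\in\cE_r(x)}\mu_{r+1,H}$ from Construction \ref{Constr:nu} is in line with the paper's argument.
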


Note that Proposition~\ref{Prop:EpartResult} depends on the convention that we choose the maximal contact first in the $ E $-part of the given {\IPH}.
	
\begin{Obs}[\emph{Big steps if} $ { D_q  = 1 } $]
\label{Obs:BigDstep}
	Set
	\[
		d := \min \left\{\, q \in \IZ_+ \;|\; D_q  = 1  \,\right\}
	\hspace{10pt}
	\mbox{ and }
	\hspace{10pt}
		r:= \min \left\{\, \ell \in \IZ_+ \mid \ell > d \ \mbox{ and } \  D_q \neq 1 \,\right\}.
	\]
	Since $ D_d  = 1 $, we have $ \cG_d  = \cH_{ d - 1 } $.
	If $ s_d = \# E^d  = 0 $, then the next step is as without exceptional divisors.
	On the other hand, if $ s_d \geq 1 $, then we can apply Observation \ref{Obs:BigEstep} until the $ E $-part is empty.
	
	This works until we come to $ \cH_{ r - 1 }  $ where $ D_r  \neq 1 $.
	By the convention of choosing first the exceptional components in the $ E $-part, the $ \dBu{E}{d} $-part of $ \cH_{ r - 1 }  $ is empty.
	This implies that $ \cH_{ r - 1 } $ is only given by the $ \cG_d $-part.
	(But it is not necessarily the coefficient pair of $ \cG_d  $ \wrt $ V( z_d, \ldots, z_{ r - 1 } ) $, because maybe not all $ \nu_i  $ are equal $ 1 $ for $  d < i < r $;
	nevertheless the situation is similar to Observation \ref{Obs:quasihomogeneous}
	 --- see  \cite[Remark~3.5.6]{BerndThesis}). 
	
	We modify $ \cH_{ r - 1 }  $ as described in Construction \ref{Constr:nu} 
	(factor out $ D_r  $ and then add $ ( D_r  , 1 - \nu_{ r } ) $)
	and obtain $ \cG_r $.
	
	If $ \mu_{ r }  = 1 $, then 
	$ (D_r , 1 - \nu_r) \simHx \bigcap_H\, (g_H, 1) $,
	where the intersection is over those $ H \in \cE_r $ with $ \mu_{r,H} \neq 0 $ and $ g_H $ denotes a local generator of $ H $.
	Then the same procedure as in the Observation~\ref{Obs:BigEstep} can be applied:
	First we choose the maximal contact only in the part coming from $ D_r  $ and after that we consider the $ \dBu{E}{r} $-part.
	
	We can apply this until we get to the point, where $ D_{r'}  \neq 1 $ and $ \mu_{r'}  > 1 $.
	Then we have to apply the full procedure to construct $ \nu_{ r' } $ and we go back to the beginning of this observation. 
\end{Obs}

%
%
%
%
%
%
%
%
%
%
%
%
%
%


\end{document}